\documentclass[11pt, article]{amsart}
\usepackage{amsmath}
\usepackage{amsthm}
\usepackage{amssymb}
\usepackage{amscd}
\usepackage{amsfonts}
\usepackage{amsbsy}
\usepackage{graphicx}
\usepackage{array}
\usepackage{color}
\usepackage{epsfig}
\usepackage{url}
\usepackage{overpic}
\usepackage{epstopdf}
\usepackage{setspace}
\usepackage{tikz}
\usepackage[lmargin=2.5cm,tmargin=2.5cm,bmargin=2.5cm,rmargin=2.5cm]{geometry}
\usepackage{indentfirst}
\usepackage[hidelinks]{hyperref}
\hypersetup{colorlinks=true,linkcolor=red,citecolor=green,urlcolor=blue}
\frenchspacing
\usepackage{multicol}
\usepackage{footnote}
\usepackage{subfigure}

\newcommand{\R}{\ensuremath{\mathbb{R}}}
\newcommand{\la}{\lambda}

\newcommand{\vs}{\vspace{0,5cm}}

\def\e{\varepsilon}
\newcommand{\dis}{\displaystyle}
\newcommand{\sgn}{\mbox{\normalfont sign\,}}

\newtheorem {theorem} {Theorem}
\newtheorem {definition} {Definition}

\newtheorem {lemma}{Lemma}

\newtheorem {remark} {Remark}

\begin{document}
\onehalfspacing

%Revistas para submeter:
%NONLINEARITY (BRISTOL)
%JOURNAL OF NONLINEAR SCIENCE
%JDE
%MATHEMATISCHE ZEITSCHRIFT
%PROCEEDINGS OF THE ROYAL SOCIETY OF EDINBURGH. SECTION A. MATHEMATICS

\title[Limit cycles in a generalized Rayleigh Li\'enard oscillator] {Lower bounds for the number of limit cycles in a generalized Rayleigh-Li\'enard oscillator}

\author[R. D. Euz\'ebio, J. Llibre, D. J. Tonon]{Rodrigo D. Euz\'ebio$^1$, Jaume Llibre$^2$ and Durval J. Tonon$^1$}

\address{$^1$ Institute of Mathematics and Statistics of Federal University of Goi\'{a}s, Avenida Esperan\c{c}a s/n, Campus Samambaia, 74690-900, Goi\^{a}nia, Goi\'{a}s, Brazil}

\address{$^2$ Departament de Matem\`{a}tiques, Universitat Aut\`{o}noma de Barcelona, 08193 Bellaterra, Catalunya-Barcelona, Spain.}

\email{euzebio@ufg.br} \email{jllibre@mat.uab.cat} \email{djtonon@ufg.br}

\subjclass[2010]{ 34C23 \and 34C25 \and 37G15}

\keywords{Rayleigh Li\'enard oscillator \and limit cycles \and Lyapunov constants \and Melnikov function}

\maketitle

\begin{abstract}
In this paper a generalized Rayleigh-Li\'enard oscillator is consider and lower bounds for the number of limit cycles bifurcating from weak focus equilibria and saddle connections are provided. By assuming some open conditions on the parameters of the considered system the existence of up to twelve limit cycles is provided. More precisely, the approach consists in perform suitable changes in the sign of some specific parameters and apply Poincar\'e-Bendixson Theorem for assure the existence of limit cycles. In particular, the method for obtaining the limit cycles through the referred approach is explicitly exhibited. The main techniques applied in this study are the Lyapunov constants and the Melnikov method.
\end{abstract}

\section{Introduction}\label{Secao-Introducao}

\subsection{Historical facts and equations of Rayleigh and Li\'enard}

Ordinary differential equations (ODEs) have been largely studied in mathematics since the invention of Calculus back in 17th century. Since that the theory have proved to be very accurate to model real problems from mechanics movements and chemical reactions to social and financial sciences. The interest by ODEs gained even more attraction after the remarkable work of Poincar\'e entitled {\it M\'emoire sur les courbes d\'efinies par une \'equation diff\'erentielle}, see \cite{Poincare}. This paper, dated 1882, is consider one of the starting points of the so called qualitative theory of ODEs. In particular, Poincar\'e formally introduced the concept of limit cycle, an isolated periodic orbit inside the set of all periodic orbits of an ODE, and exhibited an {\it ad hoc} example of ODE presenting a limit cycle without any connection to some concrete problem. However, the first reported case of a limit cycle surging from a real model ODE was probably provided by Rayleigh in 1877 in his study on the oscillations of a violin string, see \cite{Rayleigh}. Posteriorly in 1908 another example of limit cycle emerged from a series of works of Poincar\'e addressing wireless telegraphy, although the most recognized example of a limit cycle is due to Van der Pol on the electrical circuits in 1927, see \cite{VanDerPol}.

The goal of this paper is to study the existence and the number of limit cycles in a system which is a generalization of the Rayleigh and Van der Pol systems. The equation proposed by Rayleigh which is nowadays known as {\it Rayleigh equation} is
\begin{equation}\label{RayleighSystem}
\ddot{x}+ax+\varepsilon(c_3+c_4\dot{x}^2)\dot{x}=0
\end{equation}
where $\varepsilon$ is a small parameter. We also point to \cite{Leissa} for some historical facts about Rayleigh work.

For the sake of applications of non-linear systems it is usually interesting to assume that the unperturbed part of \eqref{RayleighSystem} has a potential of the form $V(x)=ax^2+bx^4$ so the total energy is $\dot{x}^2+V(x)$. This is achieved by adding the term $2bx^3$ to the last equation obtaining then the equation
\begin{equation}\label{GeneralRayleighSystem}
\ddot{x}+ax+2bx^3+\varepsilon(c_3+c_4\dot{x}^2)\dot{x}=0,
\end{equation}
which is the {\it generalized Rayleigh system}. By replacing the term $c_4\dot{x}^2$ in the Rayleigh equations by $c_2x^2+c_1x^4$ we obtain the famous generalized Van der Pol or Li\'enard equation. Therefore, by combining generalized Rayleigh and Li\'enard equations we obtain
\begin{equation}\label{GeneralMixedRayleighLienardSystem}
\ddot{x}+ax+2bx^3+\varepsilon(c_3+c_2x^2+c_1x^4+c_4\dot{x}^2)\dot{x}=0.
\end{equation}
Last equation is common referred in the literature as {\it mixed generalized Rayleigh-Li\'enard equation}. In the general case, one can study the problem
\begin{equation*}\label{GeneralSystem}
\ddot{x}+ax+2bx^3=\varepsilon f(x,\dot{x}),
\end{equation*}
see for instance the work of Guckenheimer and Holmes in \cite{GH}.

In this paper we study the limit cycles for the case $f(x,\dot{x})=(c_3+c_2x^2+c_1x^4+c_4\dot{x}^2+c_5 x^6+ c_6 \dot{x}^4)\dot{x}$ so that the mixed generalized Rayleigh-Li\'enard equation becomes a particular case of the equation we deal with. The equation we study is then
\begin{equation}\label{eq-R-Lgeneralizado}
\ddot{x}+ax+2b x^3-\varepsilon(c_3 + c_2 x^2+ c_1 x^4+c_4 \dot{x}^2 + c_5 x^6+ c_6 \dot{x}^4)\dot{x}=0,
\end{equation}
that is equivalent to the system
\begin{equation}\label{sistema-R-Lgeneralizado}
%\left\{
\begin{array}{ll}
\dot{x}   &=y,\\
\dot{y}  &= -ax-2b x^3 + \varepsilon Q(x,y).
\end{array}
%\right.
\end{equation}
where $Q(x,y)=(c_3 + c_2 x^2+ c_1 x^4+c_4 y^2 + c_5 x^6+ c_6 y^4)y$.
Our purpose is to obtain a lower bound for the number of limit cycles of \eqref{sistema-R-Lgeneralizado}.

\subsection{Related literature and main goals of the paper}
The problem of finding limit cycles involves several methods and approaches, some of them are briefly summarized in what follows. In \cite{Bejarano} the authors study equation \eqref{GeneralMixedRayleighLienardSystem} by using the harmonic balance and Krylov-Bogoliubov methods to obtain up to two limit cycles. System \eqref{GeneralMixedRayleighLienardSystem} was also studied by Lynch in \cite{Lynch} by using Lyapunov constants obtaining three limit cycles. Then, the same authors of \cite{Bejarano} using harmonic balance method and elliptic functions presented an example with seven limit cycles for the case $a<0$ and $b>0$, see \cite{Bejarano2}. In \cite{Wu-Han} the authors prove that system \eqref{GeneralMixedRayleighLienardSystem} can have eight limit cycles, improving the lower bound obtained in \cite{Bejarano2}. The authors use both Lyapunov constants as well as Melnikov method to get the results for the case $ab<0$. It also worth to mention the recent work \cite{Chen} on which the authors study the global dynamics from a particular case of system \eqref{GeneralRayleighSystem}, namely, assuming $\varepsilon=c_4=2b=1$. The main result of that paper provides conditions on $a$ and $c_3$ so that system \eqref{GeneralRayleighSystem} presents pitchfork, Hopf, homoclinic and double limit cycle bifurcations, which would be a very difficult - if possible - task considering arbitrary parameters or a more general system \eqref{GeneralMixedRayleighLienardSystem}.

In this paper we study system \eqref{sistema-R-Lgeneralizado} which generalizes equation \eqref{GeneralMixedRayleighLienardSystem}. We consider the case where $a$ and $b$ have opposite sign and apply both Lyapunov and Melnikov methods as the authors did in \cite{Wu-Han}. The approach consists in to consider suitable perturbations on the coefficients of the system to produce changes in the sign of the Lyapunov constants (derivatives of the Poincar\'e map) and Melnikov function. The limit cycles essentially bifurcate from weak focus equilibria and from heteroclinic and homoclinic loops containing those weak focus.

The main contributions of this paper can be summarized as follows: first, we consider a quite general system without assuming any hypotheses on the parameters except the condition $ab<0$. So we are able to obtain from one to twelve limit cycles in a region of the phase portrait containing the equilibria. On the other hand, we explicitly exhibit the algorithm to obtain those limit cycles and we provide the conditions for the realization of those number of limit cycles. This important part of the process for obtaining the limit cycles is in general omitted in the literature. This is the case in \cite{Wu-Han}, where the authors only pointed out the approach which {\it a priori} does not guarantee the realization of the limit cycles as claimed. We stress, however, that although paper \cite{Wu-Han} contain some minor miscalculations, assuming $c_5=c_6=0$ our results points to an upper bound of eight limit cycles, which is the quantity obtained in that paper.

\subsection{The main result of the paper}
We distinguish between the limit cycles that bifurcate from the weak focus equilibria from the ones that bifurcate from saddle connections, denoting them as small amplitude limit cycles, or large amplitude limit cycles, respectively. Therefore we say that system \eqref{sistema-R-Lgeneralizado} presents a configuration $(i,j)$ of limit cycles if there exists $i$ limit cycles of small amplitude and $j$ limit cycles of large amplitude. The main result of this paper is the following.

\begin{theorem}\label{teorema-apos}
	Consider system \eqref{sistema-R-Lgeneralizado}. Then there exists suitable values of parameters realizing the following configurations of limit cycles:
	\begin{itemize}
		\item [$(a)$] $(s,m)$ if $a>0$ and $b<0$, where $m\in \{0, 1, 2, 3\}$, $s\in \{0, 1, 2, 3, 4, 5\}$ and $s+m\leq 5$.
		\item [$(b)$] $(2s,3m+k)$ if $a<0$ and $b>0$, where $m\in \{0, 1, 2\}$, $k\in\{1,2\}$, $s\in \{0, 1, 2, 3, 4, 5\}$ and $2s+3m+k\leq 12$.
	\end{itemize}	
\end{theorem}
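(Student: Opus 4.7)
The plan is to exploit the Hamiltonian structure of the unperturbed system ($\varepsilon=0$) with $H(x,y)=y^2/2+ax^2/2+bx^4/2$ and to combine two classical bifurcation techniques: Lyapunov constants near the weak foci produce the small amplitude cycles, while the first order Melnikov function along the period annuli produces the large amplitude cycles. I would begin by drawing the two phase portraits dictated by $ab<0$. In case (a), $a>0,\,b<0$, the origin is a center surrounded by a period annulus bounded by two heteroclinic connections joining the saddles $(\pm\sqrt{-a/(2b)},0)$. In case (b), $a<0,\,b>0$, the origin is a saddle with two symmetric centers at $(\pm\sqrt{-a/(2b)},0)$ and the stable and unstable separatrices form a figure-eight with two homoclinic loops, whose exterior is filled by a second period annulus of large orbits. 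The $x\mapsto -x$ symmetry of the unperturbed system will play a central role in case (b).

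For the small amplitude cycles, I would compute the Lyapunov constants $V_1,V_3,V_5,\ldots$ at each weak focus by the normal-form or polar-coordinate algorithm. The perturbation $Q(x,y)=(c_3+c_2x^2+c_1x^4+c_5x^6+c_4y^2+c_6y^4)y$ carries six independent parameters, which I expect to suffice for making $V_1,V_3,V_5,V_7,V_9$ vanish independently while keeping $V_{11}$ nonzero, yielding up to five cycles per focus by the standard degenerate Hopf bifurcation argument. In case (a) there is a single focus, hence $s\le 5$. In case (b) the symmetry $x\mapsto -x$ forces the Lyapunov constants at the two centers to coincide, so cycles bifurcate in symmetric pairs, which explains the factor $2s$.

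For the large amplitude cycles I would work with the first order averaging / Melnikov function
\[
M(h)=\oint_{H=h}Q(x,y)\,dx=\sum_{i,j} c_i\,I_{ij}(h),
\]
where the basic integrals $I_{ij}$ are elliptic. In case (a), $M$ controls bifurcation inside the heteroclinic period annulus; I would argue that four of the coefficient integrals are linearly independent (by analysing their asymptotics at the boundary of the annulus, where logarithmic terms from the heteroclinic level separate them), so $M$ can be chosen with three simple zeros, hence up to three large amplitude cycles. In case (b) the analysis splits into the two inner lobes of the figure-eight, whose Melnikov functions coincide by symmetry and jointly create $3m$ homoclinic cycles, and the exterior annulus of big orbits, where a further Melnikov analysis around the figure-eight supplies $k\in\{1,2\}$ extra cycles; this distinction between $k=1$ and $k=2$ should reflect whether the perturbation at the figure-eight level creates one or two external cycles of Poincaré-Pontryagin type.

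The decisive and most delicate step is the simultaneous realization of both kinds of cycles, not only their separate upper bounds. I would proceed hierarchically, on successively finer scales of a single perturbation parameter: first fix the sextic/quartic/quadratic coefficients so that the Melnikov function has the desired number of simple zeros, producing the large cycles in thin annuli that are transversal to the flow; then adjust $c_3$ and the remaining freedom to turn the center(s) into weak foci of the required order; finally activate a sequence of much smaller perturbations that successively flip the signs of the Lyapunov constants, creating the small cycles near the foci. At every stage, Poincaré-Bendixson applied inside thin annuli bounded by curves on which the flow has controlled transversality guarantees that the previously produced cycles persist. I expect the hard part to be the explicit computation and linear independence proof for the Melnikov integrals $I_{ij}(h)$, in particular their behaviour at the singular level of the loops, since these govern the precise bounds on $m$ and $k$ in part (b), and they require a careful expansion of elliptic integrals that is not entirely automatic.
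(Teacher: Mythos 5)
Your overall architecture (Lyapunov constants at the weak foci for the small cycles, a Melnikov-type analysis for the large ones, symmetry doubling in case $(b)$, and a hierarchy of successively smaller perturbations glued together by Poincar\'e--Bendixson in thin annuli) matches the paper. The genuine divergence is in how the large-amplitude cycles are produced. You propose to study the first-order Melnikov function $M(h)=\oint_{H=h}Q\,dx$ on the whole period annulus and to extract three simple zeros from the linear independence of the elliptic integrals $I_{ij}(h)$. The paper never does this: it works exclusively at the separatrix level, using Roussarie's hierarchy for a loop --- (i) the value of $M_1$ at the loop controls persistence of the connection, (ii) the divergence at the saddle controls the stability of the persistent loop, and (iii) when that divergence vanishes, $\int_{\mathit{A}}\partial Q_1/\partial y\,dt$ controls the stability at the next order. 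Flipping the sign of (iii), then of (ii), each time applying Poincar\'e--Bendixson, gives two cycles, and finally breaking the loop with $M_1$ of the appropriate sign gives the third. This reduces everything to evaluating a finite list of explicit constants and solving linear equations in the $d_i$ (resp.\ $e_i$), and completely avoids the Chebyshev problem for Abelian integrals.

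That avoided problem is exactly where your proposal has a gap: the claim that the span of the $I_{ij}$ contains an element with three simple zeros in the annulus is asserted, not proved, and it is the hardest analytic step of your route (the six integrals are not independent --- they reduce via Picard--Fuchs relations --- and bounding the number of zeros of their linear combinations requires a genuine Chebyshev-type argument that you only sketch via boundary asymptotics). A second, smaller issue is your bookkeeping for $3m+k$ in case $(b)$: the factor $3$ per step does not come from the two inner lobes alone (symmetry of the lobes would only give $2m$); in the paper each stability flip of the double homoclinic loop produces one cycle external to the figure-eight plus one inside each lobe, hence three per flip and $3m$ in total, while the final breaking of the connection gives either one external cycle ($k=1$) or a symmetric internal pair ($k=2$). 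Without this accounting the specific form $(2s,3m+k)$ in the statement is not actually derived from your argument.
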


The rest paper is organized as follows: In Section \ref{Secao-Resultados} we present some general facts about Lyapunov constants and Melnikov method. In Section \ref{Secao-FormaCanonica} the canonical forms that we consider in this paper are presented. The Lyapunov constants and the Melnikov functions related with the generalized Rayleigh-Li\'enard system are given in Sections \ref{Secao-ConstantesLyapunov} and \ref{Secao-FuncoesMelnikov}, respectively. Finally, in Section \ref{Secao-Prova} we state and prove some auxiliary results and prove Theorem \ref{teorema-apos}.

\section{Preliminary}\label{Secao-Resultados}

\subsection{Lyapunov constants}
In the following we briefly present the approach to deal with the limit cycles emerging from changing signs in the Lyapunov constants. It can be founded for instance in \cite{Bautin, Blows-Lloyd, LivroLlibre}. Consider differential systems
\begin{eqnarray}\label{sistema-geral}
\dot{x}=\la x-y+P(x,y),\qquad \dot{y}=x+\la y+Q(x,y),
\end{eqnarray}
where $P$ and $Q$ are polynomials without constant and linear terms. Then the origin of system \eqref{sistema-geral} is a {\it weak focus} if $\la=0$. The limit cycles that bifurcate from a weak focus are called {\it small amplitude limit cycles}. 

When $\la=0$ we denote a local Lyapunov function by $V$, defined in a neighborhood of the origin. Then the origin is a weak focus it is stable or unstable if $\dot{V}<0$ or $\dot{V}>0$, respectively, where $\dot{V}$ denotes the rate of changes of $V$ along the trajectories of \eqref{sistema-geral}. The expression of $V$ can be constructed, see \cite{Blows-Lloyd} and \cite{Gobber}, and it is of the form
\[
\dot{V}=\eta_2 (x^2+y^2) +\eta_4 (x^2+y^2)^2+ \dots + \eta_{2k} (x^2+y^2)^{2k}+ \ldots,
\]
where $\eta_{2k}$ is a polynomial in the coefficients of the polynomials $P$ and $Q$. We define $\eta_{2k}$ the $k$-th {\it focal value}. In the following we get that the weak focus is stable, unstable if the first non-zero focal value is negative, positive, respectively, see \cite{Blows-Lloyd}.

When the origin is a weak focus it is a center if and only if
$\eta_{2k}=0$ for all $k$. Moreover the stability of the origin is
determined by the sign of the first non--zero focal value. As
$\eta_{2k}$ is relevant only when $\eta_{2l}=0$ for $l<k$, we put
$\eta_2=\eta_4=\ldots=\eta_{2k-2}=0$ in the expression for
$\eta_{2k}$. Instead of working with the $\eta_{2k}$ following to many authors we prefer to work with
$$V_{2k+1}=2 \pi\eta_{2k},$$
and call it the $n$--th {\it Lyapunov constant}. 

For $\e>0$ and small consider the interval $J=\{(x,0): 0\le x < \e\}$ and the {\it Poincar\'e return map} $x\mapsto h(x)$ defined from $J \to \{(x,0): 0\le x \}$. It assigns to $x$ the abscissa $h(x)$ of the point where the orbit of the differential system \eqref{sistema-geral} starting at the point $(x,0)\in J$ first returns to the positive half-axis $\{(x,0): 0\le x \}$. Then the {\it displacement function} is defined as $x\mapsto d(x)$ from $J\to\R$ by $d(x)=h(x)-x$. Therefore the orbit of system \eqref{sistema-geral} through the point $(x,0)\not= (0,0)$ is periodic if and only if $x$ is a zero of the displacement function.

Clearly the Lyapunov constants are related with the coefficients of the displacement function, because the origin of system \eqref{sistema-geral} is a center if and only if the displacement function is identically zero, if and only if the Lyapunov constants $V_{2k+1}=0$ for $k\ge 1$. In fact it is known that 
$$
d(x)= \la x \left[a_0+ \sum_{j=1}^{\infty} a_j^1 x^j \right] +\sum_{k\ge 1} V_{2k+1} x^{2k+1}\left[ 1+\sum_{j=1}^{\infty} a_j^{2k+1} x^k\right],
$$
for $|x|< \e$, where 
\[
a_0=\frac{e^{2\pi \la}-1}{\la}= 2\pi + O(\la),
\]
and the $a_j^{2k+1}$ for $k=1,2,\ldots$ are analytic functions in $\la$ and in the coefficients of the polynomials $P$ and $Q$. For more details in the displacement function see \cite{Rou}.

Since $P$ and $Q$ are polynomials, by the Hilbert basis theorem there
is a constant $m$ such that $V_{2k+1}=0$ for all $k\ge 1$ if and only if
$V_{2k+1}=0$ if $k=1,\ldots,m$. Therefore it is necessary to compute only a
finite number of the Lyapunov constants, though with few
exceptions for any given case it is unknown a priori how many are
required.  

\begin{definition}
We say that the origin is a weak focus of order $k$ of system \eqref{sistema-geral} if $\eta_2= \dots= \eta_{2k} =0$ and $\eta_{2k+2} \neq 0$.
\end{definition}

\begin{remark}
By the previous definition, the origin is a weak focus of order $k$ of system \eqref{sistema-geral} if $V_3= \dots=V_{2k-1} =0$ and $V_{2k+1} \neq 0$.
\end{remark}
In \cite{Blows-Lloyd} the authors stated that if a system presents a weak focus at the origin of order $k$ at most $k$ small amplitude limit cycles can bifurcate from the origin under perturbation of the system. In the following we describe briefly how to provide a convenient perturbation of the original system presenting a weak focus of order $k$ at the origin to produce $k$ small amplitude limit cycles.

The expressions $\eta_{2k}$ provides the Lyapunov constants $V_{2k-1}$. We assume that $V_1=1$, $V_3= \dots = V_{2k-1} =0$ and $V_{2k+1} \neq 0$. Without loss of generality we assume that $V_{2k+1} <0$. Therefore the origin is stable. Consider $\Gamma_1$ be a level curve of $V$ which is sufficiently close to the origin. So the flow of system \eqref{sistema-geral} is inward across it.

Now consider a suitable perturbation $\mathit{S}_1$ of system \eqref{sistema-geral} such that the Lyapunov constants satisfy $V_3= \dots = V_{2k-3} =0$ and $V_{2k-1}>0$. Therefore, the origin now is unstable for $\mathit{S}_1$. As $\mathit{S}_1$ is sufficiently close to system \eqref{sistema-geral} then the flow remains inward across $\Gamma_1$. Consider $W_1$ the Lyapunov function of $\mathit{S}_1$, take $\Gamma_2$ a level curve of $W_1$, inside of the region limited by $\Gamma_1$ and sufficiently close to the origin in such a way that the flow of $\mathit{S}_1$ is outward across of $\Gamma_2$. Therefore by the Poincar\'e-Bendixson Theorem we conclude that there exists a limit cycle of $\mathit{S}_1$ between $\Gamma_1$ and $\Gamma_2$. In the following working in a similar way we consider a convenient perturbation $\mathit{S}_2$ of $\mathit{S}_1$ with analogous properties.

In this way at most $k$ limit cycles can be produced by convenient perturbations of system \eqref{sistema-geral}. Note that the Lyapunov constants must satisfy:
\[
\begin{array}{l}
|V_{2i-1}| \ll |V_{2i+1}|\quad\mbox{and}\quad
V_{2i-1}  \cdot V_{2i+1}<0,
\end{array}
\]
for $i=3, \dots, k$. If all the constants $V_{2k+1}$ are zero then the origin is a center.

\subsection{Melnikov method}
In order to present the Melnikov method, note that system \eqref{sistema-R-Lgeneralizado} with $\varepsilon=0$ is a Hamiltonian system with the Hamiltonian function
\[
H(x,y)=\dfrac{1}{2}(y^2 + ax^2 + bx^4).
\]
If $a b>0$ then the origin is the unique equilibrium point and if $ab<0$ there exists three equilibrium points: $O=(0,0), p_1 = \left(  \sqrt{\frac{-a}{2b}}, 0\right)$ and $p_2=\left(-  \sqrt{\frac{-a}{2b}} , 0\right)$.
In this way if $a>0$ and $b<0$, then $O=(0,0)$ is a center point and $p_1,p_2$ are saddle points, and if $a<0$ and $b>0$ then $O=(0,0)$ is saddle point and $p_1,p_2$ are center points. As system \eqref{sistema-R-Lgeneralizado} with $\varepsilon=0$ is Hamiltonian, then for the case $a>0$ and $b<0$ we have a heteroclinic loop between the two saddle points $p_1$ and $p_2$. On the other hand in the case $a<0$ and $b>0$ the system presents a homoclinic loop. It is important to note that system \eqref{sistema-R-Lgeneralizado} is invariant under the transformation $(x,y)\mapsto (-x,-y)$, so its phase portrait is symmetric with respect to the origin, see Figure \ref{CampoHamiltoniano}.
\begin{figure*}[h]
	\begin{center}
		\begin{overpic}[width=13cm]{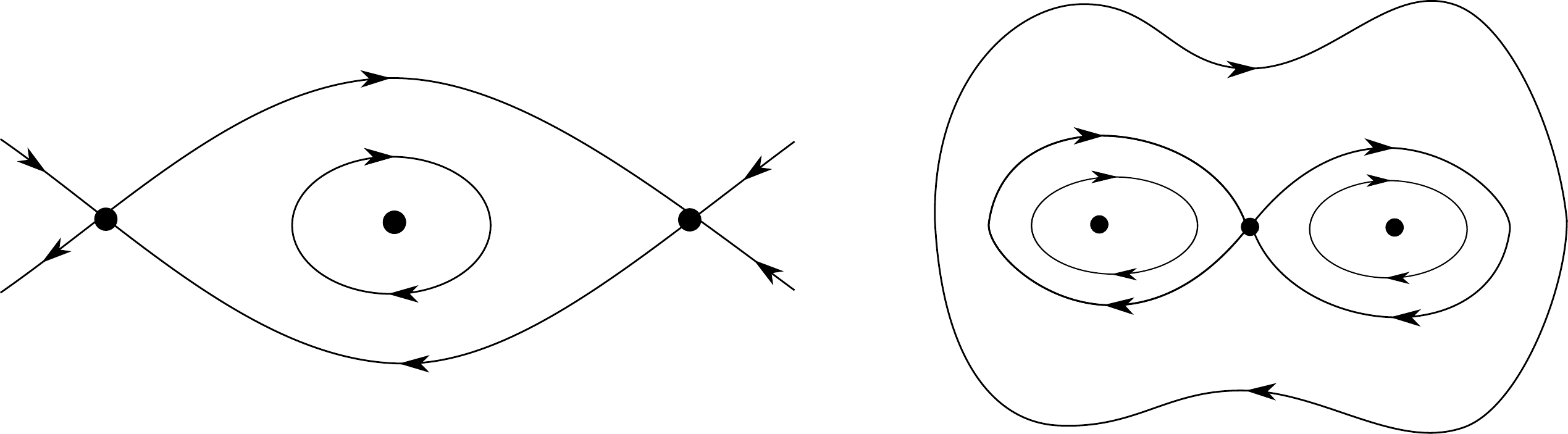}
			%\begin{overpic}[grid,tics=10,width=13cm]{CampoHamiltoniano.pdf}
			\put(5,10){$p_2$}\put(24,10){$O$}\put(44,10){$p_1$}
			\put(69,11){$p_2$}\put(78,9){$O$}\put(87,11){$p_1$}
			\put(23,-1){$(a)$}\put(78,-1){$(b)$}
		\end{overpic}
	\end{center}
	\caption{The topological structure of system \eqref{sistema-R-Lgeneralizado} with $\varepsilon=0$. In $(a)$ we get $a>0, b<0$, and in $(b)$ we have $a<0, b>0$.}\label{CampoHamiltoniano}
\end{figure*}

We note that for $\varepsilon\neq0$ system \eqref{sistema-R-Lgeneralizado} is no longer Hamiltonian so if $\varepsilon$ is sufficiently small generically the saddle connections are broken and the center structure is destroyed. Therefore in both scenarios we can eventually have the birth of limit cycles emerging from those equilibriums or loops. In this direction we devote two sections to study both center-focus problem and persistence of homoclinic and heteroclinic loops.

\section{Canonical forms and topological structure for the generalized mixed Rayleigh-Li\'enard oscillator}\label{Secao-FormaCanonica}

\subsection{Canonical forms for the generalized mixed Rayleigh-Li\'enard oscillator}

In the following we apply a linear change of coordinates to equation \eqref{eq-R-Lgeneralizado}, providing a simpler expression for the system that models a generalized mixed Rayleigh-Li\'enard oscillator.

\begin{lemma}
	Consider system \eqref{sistema-R-Lgeneralizado}. The following statements holds.
	\begin{itemize}
		\item [$(a)$] If $a>0$ and $b<0$ then system \eqref{sistema-R-Lgeneralizado} is topologically equivalent to system
		\begin{equation}\label{FormaCanonicaR-L-apos}
	    \begin{array}{ll}
		\dot{x}  &=y,\vspace{0.2cm}\\
		\dot{y}  &= -x- \dfrac{2 b}{a^2} x^3 + \varepsilon Q_1(x,y),
		\end{array}
		\end{equation}
		where $Q_1(x,y)=(d_3 + d_2 x^2+ d_1 x^4 + d_4 y^2 + d_5 x^6+ d_6 y^4)y$, $d_1=c_1/\sqrt[5]{a},\, d_2=c_2/\sqrt[3]{a},/, d_3=c_3/\sqrt{a},\, d_4=c_4/\sqrt{a},\, d_5=c_5/\sqrt[7]{a}$ and $d_6=c_6/\sqrt{a}$.\\
		
		\item [$(b)$] If $a<0$ and $b>0$ then system \eqref{sistema-R-Lgeneralizado} is topologically equivalent to system
		\begin{equation}\label{FormaCanonicaR-L-aneg}
		\begin{array}{ll}
		\dot{x}   &=y,\vspace{0.2cm}\\
		\dot{y}  &= -x + \frac{3 \sqrt{b}}{2a} x^2 -\frac{b}{2a^2} x^3  + \varepsilon Q_2(x,y),
		\end{array}
\end{equation}
where $Q_2(x,y)=(e_3 + e_7 x + e_2 x^2+ e_8 x^3 +  e_1 x^4 + e_9 x^5 + e_5 x^6 +e_4 y^2  + e_6 y^4)y$ and
		$$
		\begin{array}{l}
		e_1 = \dfrac{2 c_1 b-15 a c_5}{8 \sqrt{2} (-a)^{5/2} b}, \;\, e_2=\dfrac{15 a^2 c_5-12 a c_1 b+4 c_2 b^2}{8 \sqrt{2} (-a)^{3/2} b^2},  \;\, e_3=-\dfrac{a^3 c_5-2 a^2 c_1 b+4 a c_2 b^2-8 c_3 b^3}{8 \sqrt{2} \sqrt{-a} b^3}\\\\
		e_4=\dfrac{c_4}{\sqrt{-2a}},\;\, e_5=\dfrac{c_5}{8 \sqrt{2} (-a)^{7/2}},\;\, e_6=\dfrac{c_6}{\sqrt{-2a}},\;\, e_7 = \dfrac{3 a^2 c_5-4 a c_1 b+4 c_2 b^2}{4 \sqrt{2} \sqrt{-a} b^{5/2}}, e_8=-\dfrac{ (5 a c_5-2 c_1 b)}{2 \sqrt{2} (-a)^{3/2} b^{3/2}}, \\\\
		e_9 =\dfrac{3 c_5 }{4 \sqrt{2} (-a)^{5/2} \sqrt{b}}		.
		\end{array}
		$$
	\end{itemize}
\end{lemma}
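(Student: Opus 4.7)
The plan is to construct, in each case, an affine change of spatial coordinates together with a time rescaling $\tau = \nu t$ which sends the relevant center-type equilibrium of the unperturbed Hamiltonian flow to the origin and normalizes the linear part to the canonical rotational form $dX/d\tau = Y$, $dY/d\tau = -X + \ldots$. Once this is done, the new coefficients of the cubic (and, in case $(b)$, quadratic) unperturbed terms and of the perturbation $\varepsilon Q(x,y)$ can be read off by direct substitution.

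For part $(a)$, with $a>0$ and $b<0$, the origin is already the center, so no translation is needed. Writing $x = \lambda X$, $y = \mu Y$, $\tau = \nu t$, the requirements $dX/d\tau = Y$ and that the linear coefficient in $dY/d\tau$ be $-1$ force $\mu = \lambda \nu$ and $\nu^2 = a$, that is, $\nu = \sqrt{a}$. The remaining freedom is used to normalize the cubic coefficient in $dY/d\tau$ to $-2b/a^2$, which pins down $\lambda = 1/\sqrt{a}$ and hence $\mu = 1$. Substituting into $\varepsilon Q(x,y)/(\nu \mu)$ and collecting terms gives the expressions for $d_1,\ldots,d_6$ as the various $c_j$ divided by an appropriate half-integer power of $a$.

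For part $(b)$, with $a<0$ and $b>0$, the origin is a saddle, and one first translates the right-hand center $p_1=(x_0,0)$, where $x_0=\sqrt{-a/(2b)}$, to the origin by $x = x_0 + u$, $y = v$. Using $2b x_0^2 = -a$, the equilibrium condition eliminates the constant term and produces
\[
\dot u = v, \qquad \dot v = 2 a u - 6 b x_0 u^2 - 2 b u^3 + \varepsilon Q(x_0 + u, v).
\]
Applying the same style of rescaling with $\nu = \sqrt{-2a}$, $\lambda = 1/\sqrt{-2a}$, $\mu = 1$ normalizes the linear part and simultaneously produces the stated coefficients $3\sqrt{b}/(2a)$ for $x^2$ and $-b/(2a^2)$ for $x^3$; the verification of the quadratic coefficient reduces to the identity $b x_0 \alpha = \sqrt{b}/2$ with $\alpha = \lambda$.

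The main obstacle is essentially bookkeeping. One must expand $Q(x_0+u,v)$ as a polynomial in $u$ and $v$ and collect coefficients, then multiply by $1/\nu\mu = 1/\sqrt{-2a}$. Because $Q$ contains only even powers of $x$, the translation generates every intermediate power of $u$ from $0$ through $6$; the odd powers of $u$ are precisely those producing the new monomials $e_7 X$, $e_8 X^3$, $e_9 X^5$ that were absent in $Q_1$, while the even powers contribute to $e_3, e_2, e_1, e_5$ and the $v$-only monomials give $e_4, e_6$. Tracking the binomial coefficients arising from $(x_0+u)^2$, $(x_0+u)^4$, $(x_0+u)^6$ together with the scaling factors then yields the explicit formulas for $e_1,\ldots,e_9$; the only risk is a sign or a combinatorial slip, with no conceptual difficulty beyond that already present in part $(a)$.
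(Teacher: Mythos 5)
Your proposal is correct and follows essentially the same route as the paper: no translation plus the rescaling $\nu=\sqrt{a}$, $\lambda=1/\sqrt{a}$, $\mu=1$ in case $(a)$, and the translation of $p_1=(\sqrt{-a/(2b)},0)$ to the origin followed by the rescaling $\nu=\sqrt{-2a}$, $\lambda=1/\sqrt{-2a}$, $\mu=1$ in case $(b)$, with the coefficients read off by substitution. Your observation that the $d_j$ are the $c_j$ divided by half-integer powers of $a$ (i.e., $d_1=c_1/a^{5/2}$, $d_2=c_2/a^{3/2}$, $d_5=c_5/a^{7/2}$) is the correct outcome of the computation; the radicals $\sqrt[5]{a}$, $\sqrt[3]{a}$, $\sqrt[7]{a}$ in the statement appear to be typesetting slips for $\sqrt{a^5}$, $\sqrt{a^3}$, $\sqrt{a^7}$, as confirmed by the exponent $(-a)^{7/2}$ appearing in $e_5$ of part $(b)$.
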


\begin{proof}
Consider system \eqref{sistema-R-Lgeneralizado}. We initially consider the case $a>0$ and $b<0$. Applying the change of coordinates $\widetilde{x} = \sqrt{a} x, \widetilde{y} = y$ and $\widetilde{t} = \sqrt{a} t$ we obtain system \eqref{FormaCanonicaR-L-apos}, where we have removed the tilde in the expression of the system.
	
For the case $a<0$ and $b>0$, we initially translate the point $p_1=\left(  \sqrt{\frac{-a}{2b}}, 0\right)$ to the origin by the change of coordinates $\widetilde{x} = x- \sqrt{\frac{-a}{2b}}$ and $\widetilde{y} = y$. So system \eqref{sistema-R-Lgeneralizado} becomes
\begin{equation}\label{sistema-transladado}
\begin{array}{ll}
\dot{x}   &=y,\vspace{0.2cm}\\
\dot{y}  &= x \left(3 \sqrt{-2 a b} x + 2 a - 2 b x^2\right) +  \varepsilon Q(x,y),
\end{array}
\end{equation}
where $Q(x,y)= \frac{y }{8 b^3}  \Big(8  \sqrt{-2ab} ab  x \left(c_1+5 c_5 x^2\right)$ $-6 \sqrt{-2ab} a^{2} c_5  x - a^3 c_5 + 2 a^2 b \left(c_1+15 c_5 x^2\right) $\linebreak $- 8  \sqrt{-2ab}  b^{2} x \left(2 c_1 x^2 + c_2 + 3 c_5 x^4\right) - 4 a b^2 (6 c_1 x^2 + c_2$ $ + 15 c_5 x^4) + 8 b^3 (c_1 x^4 + c_2 x^2 + c_3 + c_4 y^2 + c_5 x^6$ $+c_6 y^4)\Big)$ and as in the previous case, we remove the tildes in the expression of the system.

After we consider the rescaling $\widetilde{\widetilde{x}} = \sqrt{-2 a} \widetilde{x}, \widetilde{\widetilde{y}} = \widetilde{y}$ and $\widetilde{t} = \sqrt{-2a} t$. Applying this change of coordinates to system \eqref{sistema-transladado}, we get system \eqref{FormaCanonicaR-L-aneg}, where we remove the tildes in the expression of the system. This completes the proof of the lemma.
\end{proof}

\subsection{Topological structure of the canonical forms}\label{topological_structure}

We first note that both systems \eqref{FormaCanonicaR-L-apos} and $\eqref{FormaCanonicaR-L-aneg}$ still have three equilibrium points and they are also Hamiltonian when $\varepsilon=0$. System \eqref{FormaCanonicaR-L-apos} has the same equilibrium points than system \eqref{sistema-R-Lgeneralizado} but in this case the Hamiltonian function associated to its unperturbed part is given by
$$
H_1(x,y)=ax^2+\frac{b}{2}x^4+\frac{1}{2}y^2.
$$
Since the orbits of system \eqref{FormaCanonicaR-L-apos} with $\varepsilon=0$ lie on the $h$-levels of function $H_1$, we see that for $h=-a^2/8b$ it has a heteroclinic loop $\mathit{A}$ formed by the saddle equilibrium points and two orbits $\mathit{A}_1$ and $\mathit{A}_2$ connecting them, that is, $\mathit{A}=\{p_1\}\cup\mathit{A}_1\cup\{p_2\}\cup\mathit{A}_2$. From the expression of $H_1$ we obtain the analytical expression of these arcs:
$$
\mathit{A}_{1,2}: y=y(x)=\mp\sqrt{\dfrac{-b}{a}}x^2\pm\sqrt{\dfrac{a}{-4b}},
$$
where $|x|\leq \sqrt{-\frac{a}{2b}}$. In this case we note that $\mathit{A}$ surrounds the center equilibrium $O=(0,0)$.

On the other hand the points $O=(0,0)$, $q_1=\left(\frac{a}{\sqrt{b}},0\right)$ and $q_2=\left(\frac{2a}{\sqrt{b}},0\right)$ are equilibrium points for system $\eqref{FormaCanonicaR-L-aneg}$ being
$$
H_2(x,y)=\frac{x^2}{2}-\frac{\sqrt{b}}{2a}x^3+\frac{b}{8a^2}x^4+\frac{1}{2}y^2
$$
the Hamiltonian function of the system for $\varepsilon=0$. Doing a translation of the original system for obtaining system $\eqref{FormaCanonicaR-L-aneg}$, $q_1$ is a saddle when $\varepsilon=0$, the equilibria $O$ and $q_2$ are centers. The level $h=a^2/8b$ of Hamiltonian function $H_2$ contains two homoclinic loops $\mathit{L}_l$ and $\mathit{L}_r$ and the saddle point centered at $q_1$. We call $\mathit{L}=\mathit{L}_l\cup\{q_1\}\cup\mathit{L}_r$. In this case $\mathit{L}_l$ and $\mathit{L}_r$ are determined by the following relations:
$$
\mathit{L}_l: y=\sgn(y)(-a+\sqrt{b}x)\sqrt{\dfrac{a^2+2a\sqrt{b}x-bx^2}{a^2b}},
$$
if $\frac{a(1+\sqrt{2})}{\sqrt{b}}\leq x \leq \frac{a}{\sqrt{b}}$ and
$$
\mathit{L}_r:y=\sgn(y)(a-\sqrt{b}x)\sqrt{\dfrac{a^2+2a\sqrt{b}x-bx^2}{a^2b}},
$$
if $\frac{a}{\sqrt{b}}\leq x \leq \frac{a(1-\sqrt{2})}{\sqrt{b}}$. Observe that the homoclinic loop $\mathit{L}_i$ surrounds the equilibria $O$ and $q_2$.

\section{Lyapunov constants}\label{Secao-ConstantesLyapunov}

In this section we present the Lyapunov constants for each system.

\begin{lemma}\label{lema-coeficientesLyapunov-apos}
Consider system \eqref{FormaCanonicaR-L-apos} with $d_3=0$. The first Lyapunov constant is $V_1=1$, then
%\begin{equation}\label{V1-apos}
%V_1=e^{ \dfrac{2 \pi d_3 \varepsilon }{\sqrt{4 - d_3^2 \varepsilon ^2}}}.  
%\end{equation}
	\begin{equation}\label{V3-apos}
	V_3 = -\dfrac{1}{4} \pi  \varepsilon  (d_2+3 d_4).
	\end{equation}
	If $V_3=0$, then
	\begin{equation}\label{V5-apos}
	V_5= -\dfrac{\pi  \varepsilon  (a^2 d_1+5 a^2 d_6+6b d_4)}{8 a^2}.
\end{equation}
If $V_3=V_5=0$, then
\begin{equation}\label{V7-apos}
V_7= -\dfrac{\pi  \varepsilon  \left(-6 a^2 d_4^3 \varepsilon^2+5 a^2 d_5+80 b d_6\right)}{64 a^2}.
	\end{equation}
	If $V_3=V_5=V_7=0$, then
	\begin{equation}\label{V9-apos}
V_9= \dfrac{3 \pi  \varepsilon  \left(15 a^4 d_4^2 d_6 \epsilon ^2+12 a^2 b d_4^3 \varepsilon ^2 - 35 b^2 d_6\right)}{160 a^4}	
\end{equation}
	If $V_3=V_5=V_7=V_9=0$, then
	\begin{equation}\label{V11-apos}
V_{11}= -\dfrac{3 \pi  d_4^3 \varepsilon ^3}{3200 a^4 \left(3 a^4 d_4^2 \varepsilon ^2-7 b^2\right)^2} \left(315 a^{12} d_4^6 \varepsilon ^6 + 2644 a^8 b^2 d_4^4 \varepsilon ^4 - 9065 a^4 b^4 d_4^2 \varepsilon ^2 - 7350 b^6\right).	
 \end{equation}	
\end{lemma}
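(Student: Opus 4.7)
The plan is to apply the standard algorithm for computing Lyapunov constants (as described in Section~\ref{Secao-Resultados} and in the references \cite{Bautin, Blows-Lloyd, Gobber, LivroLlibre}) to system \eqref{FormaCanonicaR-L-apos} with $d_3 = 0$. Setting $d_3 = 0$ kills the only linear term inside $\varepsilon Q_1$, so the linearization of \eqref{FormaCanonicaR-L-apos} at the origin is the infinitesimal rotation $(x,y)\mapsto(y,-x)$, with eigenvalues $\pm i$. After the trivial linear change of variables that brings the system into the canonical form \eqref{sistema-geral}, we have $\lambda = 0$; hence the origin is a weak focus and $V_1 = 1$ by convention.

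Next I would construct a formal Lyapunov function
$$V(x,y) = \tfrac{1}{2}(x^2+y^2) + \sum_{k\ge 3} V_k(x,y),$$
with each $V_k$ a homogeneous polynomial of degree $k$ whose coefficients are determined recursively. Computing $\dot V$ along the flow of \eqref{FormaCanonicaR-L-apos} and matching coefficients order by order, at each even degree one encounters an obstruction to solvability of the cohomological equation; after the normalization $V_{2k+1} = 2\pi\eta_{2k}$ these obstructions are exactly the Lyapunov constants. The iteration proceeds as follows: at order four one reads off \eqref{V3-apos}; the condition $V_3 = 0$ determines $d_2 = -3 d_4$, which is substituted into the order-six computation to produce \eqref{V5-apos}; solving $V_5 = 0$ for $d_1$ and extending to order eight yields \eqref{V7-apos}; and so on through \eqref{V11-apos}. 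At each step it must be verified that the parameter being solved for appears with nonzero coefficient, which is directly visible from the formulas themselves.

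A noteworthy feature is the accumulation of $\varepsilon$-dependence beyond the leading order. Each substitution used to enforce the vanishing of a previous Lyapunov constant introduces $\varepsilon$-factors into some parameter (for example, after imposing $V_5 = 0$ the solved parameter picks up a piece proportional to $\varepsilon^2$), and these factors then propagate into the next obstruction. This explains the $\varepsilon^3$ contribution visible in \eqref{V7-apos}, the $\varepsilon^3$-pieces in \eqref{V9-apos}, and the higher powers of $\varepsilon$ appearing in both the numerator and denominator of \eqref{V11-apos}. The recursion must therefore track this $\varepsilon$-dependence without truncating.

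The principal obstacle is purely computational. To reach $V_{11}$ the Lyapunov ansatz must be carried to degree twelve, and since the nonlinear part of \eqref{FormaCanonicaR-L-apos} contains monomials up to $x^6 y$ and $y^5$, the intermediate polynomial expressions become unmanageable by hand. I would therefore delegate the algebra to a computer algebra system such as \emph{Mathematica} or \emph{Maple}, treating $a,b,\varepsilon,d_1,\ldots,d_6$ symbolically and simplifying under the successive vanishing conditions; once this is implemented, formulas \eqref{V3-apos}--\eqref{V11-apos} follow by direct verification.
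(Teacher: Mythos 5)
Your proposal is correct in substance but follows a different computational route from the paper. You build a formal Lyapunov function $V=\tfrac12(x^2+y^2)+\sum_{k\ge3}V_k$ and extract the focal values $\eta_{2k}$ as the obstructions in $\dot V=\eta_2(x^2+y^2)+\eta_4(x^2+y^2)^2+\cdots$, then normalize via $V_{2k+1}=2\pi\eta_{2k}$. The paper instead passes to polar coordinates, writes $dr/d\theta=\sum_{i\ge1}v_i(\theta)r^i$, expands the solution as $r(\theta,r_0)=u_1(\theta)r_0+\sum_{i\ge2}u_i(\theta)r_0^i$, and defines $V_i=u_i(2\pi)$, i.e.\ the coefficients of the Poincar\'e return map; the successive substitutions ($d_2=-3d_4$ from $V_3=0$, then solving $V_5=0$ for $d_1$, $V_7=0$ for $d_5$, $V_9=0$ for $d_6$) are identical to yours. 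The two methods are classically equivalent: modulo the ideal generated by the lower-order constants (which is exactly the setting of the lemma, since each formula is stated under the vanishing of the previous ones) they produce the same quantities up to positive multiplicative factors, so your route proves the same bifurcation-relevant content. To reproduce the \emph{literal} coefficients in \eqref{V3-apos}--\eqref{V11-apos} you would need to match the paper's normalization (its $V_i$ are the return-map coefficients $u_i(2\pi)$, not $2\pi\eta_{2k}$ computed from an arbitrary choice of $V$), but this affects only positive constant factors and not the sign analysis used later. Your approach buys a closed-form certificate of stability (the level sets of $V$ are the curves $\Gamma_i$ used in the Poincar\'e--Bendixson argument of Section~\ref{Secao-Resultados}), while the paper's polar method connects more directly to the displacement function and avoids solving the homological equation at each even degree; both reduce to the same heavy symbolic computation that you rightly delegate to a computer algebra system.
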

\begin{proof}
	Consider the polynomial system
	\[
	\begin{array}{ll}
	\dot{x}  &=y+p(x,y),\\
	\dot{y}  &=-x+q(x,y).
	\end{array}
	\]
	Applying the change of coordinates  $x=r \cos\theta$ and $y=\sin\theta$, we can write
	\begin{equation}\label{eqpolar-sistema}
	\dfrac{dr}{d\theta} =\dis \sum_{i\geq 1} v_i(\theta) r^i,
	\end{equation}
	where $v_i(\theta)$ are trigonometric polynomials in the variables $\cos\theta$ and $\sin\theta$. Denoting by $r(\theta, r_0)$ the solution of \eqref{eqpolar-sistema} satisfying $r(0, r_0)=r_0$, then in a neighborhood of $r=0$ we obtain
	\[
	r(\theta, r_0)= u_1(\theta) r_0 + \dis \sum_{i\geq 2} u_i(\theta) r_0^i,
	\]
	with $u_i(0)=0$ for all $i\geq 2$, then the Poincar\'e return map is
	\[
	\Pi(r_0)=r(2\pi, r_0) =u_1(2\pi) r_0 + \dis \sum_{i\geq 2} u_i(2\pi ) r_0^i.
	\]
	As the Poincar\'e map is analytic the condition $\Pi(r_0)\equiv r_0$ is equivalent to the fact that system \eqref{FormaCanonicaR-L-apos} presents a center at the origin. Note that $\Pi(r_0)\equiv r_0$ if and only if $u_i(2\pi )=0$ for all $i \geq 2$. As stated in \cite{Bautin} and by direct computations we obtain that $u_{2k}(2\pi)=0$ for all $k$ a positive integer.
	
	At this moment we are able to obtain the Lyapunov constants for system \eqref{FormaCanonicaR-L-apos}. Consider system \eqref{FormaCanonicaR-L-apos} and assuming that $d_3=0$, then following the steps described previously, we get that the first Poincar\'e constant
	\[
	u_1(2 \pi)=1.
	\]
%	e^{ \dfrac{2 \pi d_3 \varepsilon }{\sqrt{4 - d_3^2 \varepsilon ^2}}}
	and then we obtain
	\[
	u_3(2\pi) = -\dfrac{1}{4} \pi  \varepsilon  (d_2+3 d_4).
	\]
	Assuming that $d_2=-3 d_4$ we obtain
	\[
	u_5(2\pi)=  -\dfrac{\pi  \varepsilon  (a^2 d_1+5 a^2 d_6+6b d_4)}{8 a^2}.
	\]
	If $d_2=-3 d_4$ and $d_1 = -\frac{6 b d_4}{a^2}-5 d_6$, then
	\[
	u_7(2\pi)= -\dfrac{\pi  \varepsilon  \left(-6 a^2 d_4^3 \varepsilon^2+5 a^2 d_5+80 b d_6\right)}{64 a^2}.
	\]
	If $d_2=-3 d_4, d_1 = -\frac{6 b d_4}{a^2}-5 d_6$ and $d_5=\frac{6 d_4^3 \varepsilon ^2}{5}-\frac{16 b d_6}{a^2}$, then
	\[
u_9(2\pi)=  \dfrac{3 \pi  \varepsilon  \left(15 a^4 d_4^2 d_6 \epsilon ^2+12 a^2 b d_4^3 \varepsilon ^2 - 35 b^2 d_6\right)}{160 a^4}.
	\]
	If $d_2=-3 d_4, d_1 = -\frac{6 b d_4}{a^2}-5 d_6, d_5=\frac{6 d_4^3 \varepsilon ^2}{5}-\frac{16 b d_6}{a^2}$ and $d_6=\frac{12 a^2 b d_4^3 \varepsilon ^2}{5 \left(7 b^2-3 a^4 d_4^2 \varepsilon ^2\right)}$ then
	\[
u_{11}(2\pi)= -\dfrac{3 \pi  d_4^3 \varepsilon ^3}{3200 a^4 \left(3 a^4 d_4^2 \varepsilon ^2-7 b^2\right)^2} \left(315 a^{12} d_4^6 \varepsilon ^6 + 2644 a^8 b^2 d_4^4 \varepsilon ^4 - 9065 a^4 b^4 d_4^2 \varepsilon ^2 - 7350 b^6\right).	
\]
	As $\varepsilon$ is a small parameter then $u_{11}(2\pi)=0$ if and only if $d_4=0$. So if besides $d_3=0$ we assume $d_2=-3 d_4,\, d_1 = -\frac{6 b d_4}{a^2}-5 d_6,\, d_5=\frac{6 d_4^3 \varepsilon ^2}{5}-\frac{16 b d_6}{a^2},\, d_6=\frac{12 a^2 b d_4^3 \varepsilon ^2}{5 \left(7 b^2-3 a^4 d_4^2 \varepsilon ^2\right)}$ and $d_4=0$ then we obtain that all the others coefficients $d_i$ are null and therefore system \eqref{FormaCanonicaR-L-apos} is a Hamiltonian system, and therefore the origin is a center. In what follows we denote the Lyapunov constant $u_{i}(2\pi)$ by $V_i$.
\end{proof}

\begin{lemma}\label{lema-coeficientesLyapunov-aneg}
	Consider system \eqref{FormaCanonicaR-L-aneg} with $e_3=0$. The first Lyapunov constant is $V_1=1$, then
%{\small
%	\begin{equation}\label{V1-aneg}
%	V_1=e^{\dfrac{2 \pi  e_3 \varepsilon }{\sqrt{4-e_3^2 \varepsilon ^2}}} .
%	\end{equation}
%}
%If $V_1=1$, then
{\small
	\begin{equation}\label{V3-aneg}
	V_3= -\frac{\pi  \varepsilon  \left(2 a (e_2+3e_4)+3 \sqrt{b}e_7\right)}{8 a}.
	\end{equation}
}
	If $V_3=0$, then
{\small
\begin{equation}\label{V5-aneg}
V_5 =\dfrac{\pi  \varepsilon }{96 a^3} \Big(-4 a^3 \left(3e_1-2e_4e_7^2 \varepsilon ^2+15e_6\right)-30 a^2 \sqrt{b}e_8+117 a be_4+15 b^{3/2}e_7\Big).
\end{equation}
}
If $V_3=V_5=0$, then
{\small
\begin{equation}\label{V7-aneg}
\begin{array}{ll}
V_7   &=\dfrac{\pi  \varepsilon }{384 a^5} \Big(a^5 \left(4 \varepsilon ^2 \left(9e_4^3+12e_4e_7e_8-5e_6e_7^2\right)-30e_5\right)-21 a^4 \sqrt{b} \left(12e_4^2e_7 \varepsilon ^2+5e_9\right)\\\\
&+a^3 b \left(1005e_6-158e_4e_7^2 \varepsilon ^2\right)+210 a^2 b^{3/2}e_8-945 a b^2e_4-105 b^{5/2}e_7\Big).
\end{array}
\end{equation}
}
If $V_3=V_5=V_7=0$, then
{\small
\begin{equation}\label{V9-aneg}
\begin{array}{ll}
V_9&=\dfrac{\pi  \varepsilon }{3225600 a^7 \sqrt{b}} \Big(64 a^6e_7 \varepsilon ^4 \Big(212 a^2e_4 \Big(9e_4^3+12e_4e_7e_8-5e_6e_7^2\Big)+63 a \sqrt{b}e_7 \Big(10e_6e_7^2\\\\
&-147e_4^3\Big)+2441 be_4^2e_7^2\Big)+4725 b^{3/2} \Big(-200 a^5e_5-1665 a^3 be_6-154 a^2 b^{3/2}e_8\\\\
&+693 a b^2e_4+77 b^{5/2}e_7\Big)-30 a^3 \varepsilon ^2 \Big(3392 a^5e_4e_5e_7-336 a^4 \sqrt{b} \Big(90e_4^2e_6+11e_4e_8^2\\\\
&+16e_6e_7e_8\Big)+16 a^3 be_4 (2583e_4e_8+4910e_6e_7)-28 a^2 b^{3/2} \Big(2889e_4^3-2122e_4e_7e_8\\\\
&+699e_6e_7^2\Big)-304164 a b^2e_4^2e_7-63749 b^{5/2}e_4e_7^2\Big)\Big).
\end{array}
\end{equation}
}
If $V_3=V_5=V_7=V_9=0$, then
{\small
\begin{equation}\label{V11-aneg}
\begin{array}{ll}
V_{11}      &=K_1  \Big(-167157760 a^{12}e_4^4e_7^5 \varepsilon^8 +49116375 b^{9/2} \Big(785 a^3e_6+26 a^2 \sqrt{b}e_8-117 a be_4\\\\
&-13 b^{3/2}e_7\Big)-1536 a^9e_7 \varepsilon ^6 \Big(30 a^3 \Big(3339e_4^6-5322e_4^4e_7e_8-30950e_4^3e_6e_7^2-2368e_4e_6e_7^3e_8\\\\
&+1150e_6^2e_7^4\Big)+a^2 \sqrt{b}e_4^2e_7 \Big(-497277e_4^3-1966926e_4e_7e_8+1228180e_6e_7^2\Big)
\end{array}
\end{equation}
}

{\small
\begin{equation*}
\begin{array}{ll}
&+3 a be_4e_7^2 \Big(3023193e_4^3-46000e_6e_7^2\Big)+1441191 b^{3/2}e_4^3e_7^3\Big)-9450 a^3 b^{3/2} \varepsilon ^2 \Big(-400 a^6e_6\\\\
& \Big(3480e_4e_6+259e_8^2\Big)+200 a^5 \sqrt{b}e_6 (8724e_4e_8+3595e_6e_7)+4 a^4 b \Big(-260505e_4^2e_6\\\\
&+190248e_4e_8^2+313438e_6e_7e_8\Big)-12 a^3 b^{3/2}e_4 (689403e_4e_8+991486e_6e_7)\\\\
&+a^2 b^2 \Big(20703519e_4^3-4172844e_4e_7e_8+821669e_6e_7^2\Big)+20955447 a b^{5/2}e_4^2e_7\\\\
&+2440179 b^3e_4e_7^2\Big) -240 a^6 \varepsilon ^4 \Big(64 a^6 \Big(300e_6e_8\Big(135e_4^3+46e_6e_7^2\Big)-14595e_4^2e_6^2e_7\\\\
&+4950e_4^2e_8^3 +9824e_4e_6e_7e_8^2\Big)-48 a^5 \sqrt{b}e_4 \Big(90e_6 \Big(3465e_4^3+1208e_6e_7^2\Big)\\\\
&+111915e_4^2e_8^2+132626e_4e_6e_7e_8\Big)+8 a^4 b\Big(3018870e_4^4e_8-3191031e_4^3e_6e_7\\\\
&-1556520e_4^2e_7e_8^2-2282296e_4e_6e_7^2e_8 + 833175e_6^2e_7^3\Big)-24 a^3 b^{3/2} \Big(640710e_4^5\\\\
&-5077425e_4^3e_7e_8 -1778726e_4^2e_6e_7^2+220500e_6e_7^3e_8\Big)+2 a^2 b^2e_4e_7 \Big(-126500049e_4^3\\\\
&+27457254e_4e_7e_8+6865330e_6e_7^2\Big)+9 a b^{5/2}e_7^2 \Big(1523760e_6e_7^2-23884957e_4^3\Big)\\\\
&+11176158 b^3e_4^2e_7^3\Big)\Big),
\end{array}
\end{equation*}
}
where $K_1=\dfrac{\pi  \varepsilon}{5529600 a^9 	\Big(848 a^3e_4e_7 \varepsilon ^2+7875 b^{3/2}\Big)}$.
\end{lemma}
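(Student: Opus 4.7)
The plan is to follow exactly the scheme used in the proof of Lemma \ref{lema-coeficientesLyapunov-apos}, now applied to system \eqref{FormaCanonicaR-L-aneg}. First I would set $e_3=0$ so that the origin is a weak focus with $V_1=1$, pass to polar coordinates $x=r\cos\theta$, $y=r\sin\theta$, and rewrite the system as
\[
\frac{dr}{d\theta}=\sum_{i\ge 1} v_i(\theta)\,r^i,
\]
where each $v_i$ is a trigonometric polynomial whose coefficients are polynomials in $a,b,\varepsilon,e_1,\ldots,e_9$. Substituting the ansatz $r(\theta,r_0)=\sum_{i\ge 1}u_i(\theta)r_0^i$ with $u_1(0)=1$, $u_i(0)=0$ for $i\ge 2$, I obtain a triangular recursion for the $u_i$ that can be integrated step by step. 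The Poincaré return map is $\Pi(r_0)=\sum u_i(2\pi)r_0^i$, the even-indexed coefficients $u_{2k}(2\pi)$ vanish for the same parity reason invoked in the previous lemma, and the odd-indexed ones are, by definition, the Lyapunov constants $V_{2k+1}$.

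Next I would compute $V_3=u_3(2\pi)$ and obtain \eqref{V3-aneg}. To access $V_5$ I solve $V_3=0$ for $e_2$ in terms of $e_4,e_7$ and substitute into the expression for $u_5(2\pi)$, giving \eqref{V5-aneg}. Iterating: solve $V_5=0$ for one of the remaining parameters (for instance $e_1$), substitute in $u_7(2\pi)$ to get \eqref{V7-aneg}; solve $V_7=0$ for another parameter (e.g.\ $e_5$) and substitute in $u_9(2\pi)$ to get \eqref{V9-aneg}; and finally solve $V_9=0$ for a remaining variable and substitute in $u_{11}(2\pi)$ to reach \eqref{V11-aneg}. Each elimination must be done on top of all the previous ones so that in the final expression every $V_{2k+1}$ is computed under the hypothesis that $V_3=\cdots=V_{2k-1}=0$.

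The main obstacle is purely computational: compared with system \eqref{FormaCanonicaR-L-apos}, the canonical form \eqref{FormaCanonicaR-L-aneg} contains the three extra odd-power monomials $e_7 x$, $e_8 x^3$, $e_9 x^5$ coming from the translation of $p_1$ to the origin. These destroy the $(x,y)\mapsto(-x,-y)$ symmetry of the perturbation and therefore contribute nontrivially to every $v_i(\theta)$, so the intermediate trigonometric integrals and the polynomial sizes in the nine variables $e_i$ grow very quickly. Carrying the computation up to $V_{11}$ by hand is unrealistic; the proof is implemented in a computer algebra system, and the role of the written argument is to certify the recursive structure and record the vanishing hypotheses under which each $V_{2k+1}$ is obtained.

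One technical point deserves attention: some of the substitutions used to impose $V_3=V_5=V_7=V_9=0$ require dividing by polynomial expressions in the remaining parameters, and the resulting denominators propagate into the formula for $V_{11}$ — concretely, the factor $848\,a^3 e_4 e_7\varepsilon^2+7875\,b^{3/2}$ appearing in $K_1$. I would therefore state explicitly that the formula \eqref{V11-aneg} is valid on the open set where this quantity, together with the denominators introduced at the earlier stages, is nonzero; this is the natural parameter region in which the subsequent perturbative analysis in Section \ref{Secao-Prova} operates, so no generality is lost for the purposes of Theorem \ref{teorema-apos}.
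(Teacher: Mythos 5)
Your proposal matches the paper's proof: the authors proceed exactly as in Lemma \ref{lema-coeficientesLyapunov-apos}, passing to polar coordinates, computing the $u_{2k+1}(2\pi)$ recursively, and at each stage solving the previous constant for one parameter ($e_2$ from $V_3$, $e_1$ from $V_5$, $e_9$ from $V_7$, $e_5$ from $V_9$ --- your tentative choice of $e_5$ at the $V_7$ stage differs from theirs, but the elimination order is immaterial), with the division in the $e_5$ substitution producing precisely the factor $848\,a^3e_4e_7\varepsilon^2+7875\,b^{3/2}$ in $K_1$. Your remark about the nonvanishing of that denominator is sound (and automatic for small $\varepsilon$ since it tends to $7875\,b^{3/2}\neq 0$), so the proposal is correct and takes essentially the same route.
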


\begin{proof}
Considering now system \eqref{FormaCanonicaR-L-aneg} and $e_3=0$. Proceeding in a similar way as in the proof of Lemma \ref{lema-coeficientesLyapunov-apos}, the first Lyapunov constant is
\[
u_1(2 \pi) =1.
\]
%e^{ \dfrac{2 \pi e_3 \varepsilon }{\sqrt{4 - e_3^2 \varepsilon ^2}}}
and we conclude that the expression of $u_3(2\pi)$ is given in \eqref{V3-aneg}. Assuming that $e_2=-\frac{3 \left(2 ae_4+\sqrt{b}e_7\right)}{2 a}$
we conclude that the expression of $u_5(2\pi)$ is given in \eqref{V5-aneg}. If $e_2=-\frac{3 \left(2 ae_4+\sqrt{b}e_7\right)}{2 a}$ and
\[
e_1=\frac{8 a^3e_4e_7^2 \varepsilon ^2-60 a^3e_6-30 a^2 \sqrt{b}e_8+117 a be_4+15 b^{3/2}e_7}{12 a^3},
\]
we conclude that the expression of $u_7(2\pi)$ is given in \eqref{V7-aneg}. If $e_2=-\frac{3 \left(2 ae_4+\sqrt{b}e_7\right)}{2 a}$,
\[
e_1=\frac{8 a^3e_4e_7^2 \varepsilon ^2-60 a^3e_6-30 a^2 \sqrt{b}e_8+117 a be_4+15 b^{3/2}e_7}{12 a^3},
\]
and
\[
\begin{array}{ll}
e_9   &=\frac{  1} {105 a^4 \sqrt{b}} \Big( 36 a^5e_4^3 \varepsilon ^2+48 a^5e_4e_7e_8 \varepsilon ^2-30 a^5e_5-20 a^5e_6e_7^2 \varepsilon ^2 -252 a^4 \sqrt{b}e_4^2e_7 \varepsilon ^2\\\\
&-158 a^3 be_4e_7^2 \varepsilon ^2+1005 a^3 be_6+210 a^2 b^{3/2}e_8-945 a b^2e_4-105 b^{5/2}e_7\Big),
\end{array}
\]
we conclude that the expression of $u_9(2\pi)$ is given in \eqref{V9-aneg}. If $e_2=-\frac{3 \left(2 ae_4+\sqrt{b}e_7\right)}{2 a}$,
\[
\begin{array}{ll}
e_1  &= \dfrac{ 1 }{12 a^3} \Big( 8 a^3e_4e_7^2 \varepsilon ^2-60 a^3e_6-30 a^2 \sqrt{b}e_8+117 a be_4+15 b^{3/2}e_7\Big),\\\\
e_9   &=\frac{  1} {105 a^4 \sqrt{b}} \Big( 36 a^5e_4^3 \varepsilon ^2+48 a^5e_4e_7e_8 \varepsilon ^2-30 a^5e_5-20 a^5e_6e_7^2 \varepsilon ^2 -252 a^4 \sqrt{b}e_4^2e_7 \varepsilon ^2\\\\
&-158 a^3 be_4e_7^2 \varepsilon ^2+1005 a^3 be_6+210 a^2 b^{3/2}e_8-945 a b^2e_4-105 b^{5/2}e_7\Big),
\end{array}
\]
and
\[
\begin{array}{ll}
e_5  &=\frac{1}{120 a^5 \left(848 a^3e_4e_7 \varepsilon ^2+7875 b^{3/2}\right)}
\Big( 122112 a^8e_4^4e_7 \varepsilon ^4+162816 a^8e_4^2e_7^2e_8 \varepsilon ^4-67840 a^8e_4e_6e_7^3 \varepsilon ^4\\\\ &-592704 a^7 \sqrt{b}e_4^3e_7^2 \varepsilon ^4+907200 a^7 \sqrt{b}e_4^2e_6 \varepsilon ^2+110880 a^7 \sqrt{b}e_4e_8^2 \varepsilon ^2+40320 a^7 \sqrt{b}e_6e_7^4 \varepsilon ^4\\\\ &+161280 a^7 \sqrt{b}e_6e_7e_8 \varepsilon ^2+156224 a^6 be_4^2e_7^3 \varepsilon ^4-1239840 a^6 be_4^2e_8 \varepsilon ^2-2356800 a^6 be_4e_6e_7 \varepsilon ^2\\\\ &+2426760 a^5 b^{3/2}e_4^3 \varepsilon ^2-1782480 a^5 b^{3/2}e_4e_7e_8 \varepsilon ^2+587160 a^5 b^{3/2}e_6e_7^2 \varepsilon ^2+9124920 a^4 b^2e_4^2e_7 \varepsilon ^2\\\\ &+1912470 a^3 b^{5/2}e_4e_7^2 \varepsilon ^2-7867125 a^3 b^{5/2}e_6-727650 a^2 b^3e_8+3274425 a b^{7/2}e_4+363825 b^4e_7\Big)
\end{array}
\]
then $u_{11}(2\pi)$ is given by the expression of $V_{11}$ given in \eqref{V11-aneg}.

As $\varepsilon$ is a small parameter then the signal of $V_{11}$ is given by the sign of the expression
\begin{equation}\label{sinal-V11-CasoAneg}
\begin{array}{ll}
\mathit{S}(V_{11}) &=\dfrac{231 \pi  b^3 }{204800 a^9}\Big(785 a^3e_6+26 a^2 \sqrt{b}e_8-117 a be_4-13 b^{3/2}e_7\Big).
\end{array}
\end{equation}
So, in this case differently to the previous, assuming all the conditions under the parameters $e_3, e_2, e_1, e_9$ and $e_5$ the system \eqref{FormaCanonicaR-L-aneg} present a weak focus at the origin.
\end{proof}

\section{Melnikov function and stability of the saddle connections}\label{Secao-FuncoesMelnikov}

We now study the persistence and stability of the saddle connections of systems \eqref{FormaCanonicaR-L-apos} and \eqref{FormaCanonicaR-L-aneg}. We start analyzing on which (non generic) conditions the homoclinic and heteroclinic loops persist for these systems with $\varepsilon\neq0$. This is provided by the first order Melnikov functions associated to each system. Once we have established conditions to persistence, we will deal with the stability of such loops. More precisely, consider  $\mathit{A}_1^\varepsilon$ the unstable manifold of $p_2$ and $\mathit{A}_2^\varepsilon$ the stable manifold of $p_1$, see Figure \ref{MelnikovHeteroclinica}. Let $\mathit{A}^\varepsilon$ be the heteroclinic, homoclinic loop of systems \eqref{FormaCanonicaR-L-apos}, \eqref{FormaCanonicaR-L-aneg}, resp., and $p \in \mathit{A}^\varepsilon$. Consider
\[
n_1= \dfrac{(H_x(p), H_y(p))}{|| (H_x(p), H_y(p)) ||},
\]
then Melnikov function is
\[
d(\varepsilon, M_1)= - \langle n_1,  \overrightarrow{\mathit{A}_1^\varepsilon \mathit{A}_2^\varepsilon} \rangle,
\]
see Figure \ref{MelnikovHeteroclinica}, and for more details about the homoclinic and heteroclinic loops see \cite{Roussarie1986}.

\begin{figure*}[h]
	\begin{center}
		\vspace{1cm}
		\begin{overpic}[width=15cm]{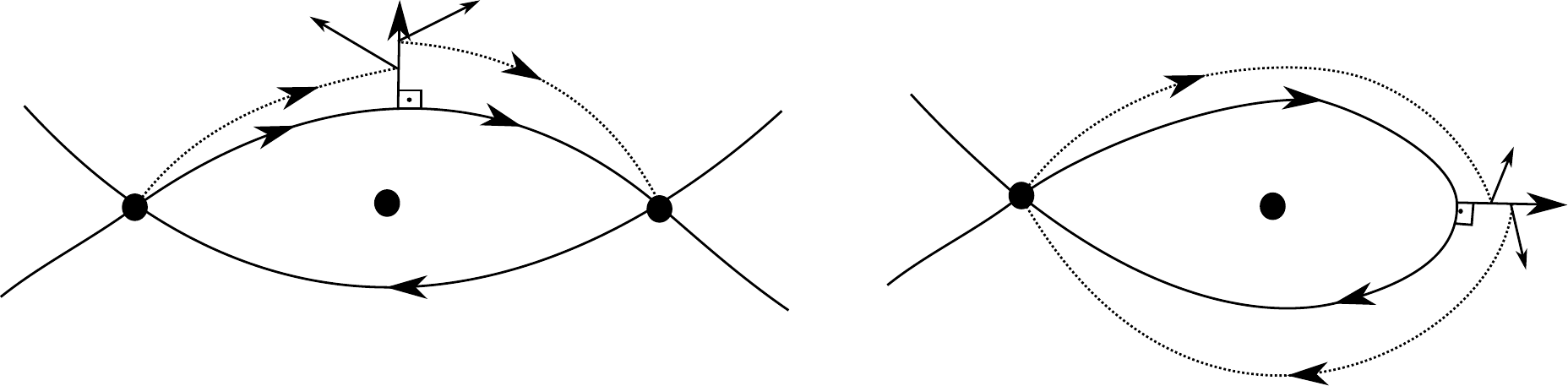}
			%\begin{overpic}[grid,tics=10,width=13cm]{MelnikovHeteroclinica.pdf}
			\put(25,16){$p$} \put(6,8){$p_2$}\put(25,8){$O$}\put(41,8){$p_1$}\put(23,26){$n_1$}\put(16,24){$\mathit{A}_1^\varepsilon$} \put(31,24){$\mathit{A}_2^\varepsilon$}
			\put(78,9){$p_1$}\put(63,8){$O$} \put(23,-3){$(a)$}\put(78,-3){$(b)$}\put(91,11){$p$} \put(100,10){$n_1$}\put(94,16){$\mathit{A}_1^\varepsilon$} \put(95,5){$\mathit{A}_2^\varepsilon$}
		\end{overpic}
	\end{center}
	\vspace{0.5cm}
	\caption{In figures $(a)$ and $(b)$ are presented the geometric approach of the construction of Melnikov functions of systems \eqref{FormaCanonicaR-L-apos} and \eqref{FormaCanonicaR-L-aneg}, respectively.}\label{MelnikovHeteroclinica}
\end{figure*}

\subsection{Persistence and stability of heteroclinic loop in system \eqref{FormaCanonicaR-L-apos}}\label{heteroclinica}

Denote by $Q_1(x,y)$ the perturbed part of system \eqref{FormaCanonicaR-L-apos}, that is, $Q_1(x,y)=(d_3 + d_2 x^2+ d_1 x^4 + d_4 y^2 + d_5 x^6+ d_6 y^4)y$. For system \eqref{FormaCanonicaR-L-apos} we have the following result:

\begin{lemma}\label{lema_funcao_melnikov}
	System \eqref{FormaCanonicaR-L-apos} has a heteroclinic loop $\mathit{A}_\varepsilon$ close to $\mathit{A}$ if and only if
	$$
	\begin{array}{ll}
d_2&=\varphi_1(d_1,d_3,d_4,d_5,d_6,\varepsilon)\\\\
	&=-\dfrac{12 d_4}{7}-\dfrac{5a^4d_5}{84b^2}+\dfrac{a^2(99d_1+160 d_6)}{462b}+\dfrac{10bd_3}{a^2}+\mathit{O}(\varepsilon).
	\end{array}
	$$
\end{lemma}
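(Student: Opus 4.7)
The plan is to compute the first-order Melnikov function along the upper heteroclinic arc $\mathit{A}_1$ of the unperturbed system and then to apply the implicit function theorem to recover the full expression for $\varphi_1$, including its $\mathit{O}(\varepsilon)$ correction. Since for $\varepsilon=0$ system \eqref{FormaCanonicaR-L-apos} is Hamiltonian (with Hamiltonian $H_1$ given in Section \ref{topological_structure}) and the perturbation enters only in the $\dot y$ component as $\varepsilon Q_1(x,y)$, one has $dH_1/dt=\varepsilon\, y\, Q_1(x,y)$ along trajectories of the perturbed flow. Using $y\,dt=dx$ on the unperturbed heteroclinic, the first-order Melnikov integral reduces to
\[
M_1=\int_{\mathit{A}_1} Q_1(x,y)\,dx,
\]
and the signed-distance function of the excerpt satisfies $d(\varepsilon,M_1)=\varepsilon M_1+\mathit{O}(\varepsilon^2)$.

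To evaluate $M_1$, parametrize $\mathit{A}_1$ by $x\in[-L,L]$ with $y=y_1(x)=-\sqrt{-b/a}\,x^2+\sqrt{-a/(4b)}$ and $L=\sqrt{-a/(2b)}$, all real and positive because $a>0>b$. The integrand $Q_1(x,y_1(x))$ is a polynomial in $x$ whose coefficients depend linearly on $(d_1,d_2,d_3,d_4,d_5,d_6)$. Using the moment formula $\int_{-L}^{L} x^{2k}\,dx=2L^{2k+1}/(2k+1)$ together with the vanishing of all odd moments by symmetry, one obtains an explicit linear form
\[
M_1=\alpha_1 d_1+\alpha_2 d_2+\alpha_3 d_3+\alpha_4 d_4+\alpha_5 d_5+\alpha_6 d_6,
\]
with explicit rational coefficients $\alpha_i=\alpha_i(a,b)$. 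A direct computation of $\alpha_2$ shows that it is non-zero, so $M_1=0$ can be solved uniquely for $d_2$ in terms of the remaining parameters, yielding the leading-order part of $\varphi_1$. The involution $(x,y)\mapsto(-x,-y)$ exchanges the two connecting arcs $\mathit{A}_1$ and $\mathit{A}_2$ and reverses both $Q_1$ and $dx$, so the Melnikov integral along $\mathit{A}_2$ equals the one along $\mathit{A}_1$; a single scalar condition therefore suffices to guarantee persistence of the whole loop $\mathit{A}_\varepsilon$.

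Since $d(\varepsilon,M_1)$ is analytic in $\varepsilon$ and in the $d_i$ and its partial derivative with respect to $d_2$ at $\varepsilon=0$ equals $\alpha_2\ne 0$, the implicit function theorem produces a unique analytic solution $d_2=\varphi_1(d_1,d_3,d_4,d_5,d_6,\varepsilon)$ of $d(\varepsilon,M_1)=0$ for small $|\varepsilon|$, whose value at $\varepsilon=0$ is $-(\alpha_1 d_1+\alpha_3 d_3+\alpha_4 d_4+\alpha_5 d_5+\alpha_6 d_6)/\alpha_2$. Matching these ratios with the explicit coefficients $-12/7$, $-5/(84)$, $99/462$, $160/462$ and $10$ announced in the lemma is then a routine arithmetic verification based on the six polynomial integrals above. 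The main obstacle lies precisely in this bookkeeping: one must keep careful track of the signs of the radicals $\sqrt{-b/a}$ and $\sqrt{-a/b}$ and reduce the resulting rational combinations to the stated form. A minor additional point is to confirm that the $(x,y)\mapsto(-x,-y)$ symmetry really collapses the two connection conditions into a single equation, so that solving $M_1=0$ is enough.
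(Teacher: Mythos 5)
Your proposal follows essentially the same route as the paper: both compute the first-order Melnikov function $M_1=\int_{\mathit{A}_1}Q_1\,dx$ along the explicit parametrization of the unperturbed arc, observe that the result is a linear form in $d_1,\dots,d_6$ with nonzero coefficient on $d_2$, solve $M_1=0$ for $d_2$, and invoke the $(x,y)\mapsto(-x,-y)$ symmetry to reduce the two connection conditions to one. The only difference is that you make explicit the implicit-function-theorem step producing the $\mathit{O}(\varepsilon)$ correction, which the paper leaves tacit; the remaining work in both arguments is the same routine evaluation of the moment integrals.
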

We write $\mathit{A}_\varepsilon=\{p_1\}\cup\mathit{A}_1^\varepsilon\cup\{p_2\}\cup\mathit{A}_2^\varepsilon$, where $\mathit{A}_{1,2}^\varepsilon$ denotes the orbits coming from $\mathit{A}_{1,2}$ for $\varepsilon\neq0$. In what follows we prove Lemma \ref{lema_funcao_melnikov} only for the arc $\mathit{A}_1$, because by symmetry $\mathit{A}_1$ is broken if and only if $\mathit{A}_2$ is broken.

\begin{proof}
	In order to obtain the conditions for the persistence of the heteroclinic loop of system \eqref{FormaCanonicaR-L-apos} with $\varepsilon=0$ it is sufficient to study the zeros of the first order Melnikov function $M_1$ associated to the heteroclinic loop. As done in \cite{Roussarie1986} this function depends on the parameters of the system and it is given by
	$$
	M_1=\displaystyle\int_{\mathit{A}_1} Q_1(x,y)\,dx,
	$$
	where we highlight that $M_1=M_1(a,b,d_1,\ldots,d_6)$. Using the expression of $\mathit{A}_1$ obtained in Subsection \ref{topological_structure} we get
	$$
	\begin{array}{rcl}
	M_1&=&\displaystyle\int_{\mathit{A}_1}(d_3 + d_2 x^2+ d_1 x^4 + d_4 y^2 + d_5 x^6+ d_6 y^4)y\,dx\vspace{0.2cm}\\
	&=&2\displaystyle\int_{0}^{\frac{a}{\sqrt{-{2b}}}} (d_3 + d_2 x^2+ d_1 x^4 + d_4 y(x)^2 + d_5 x^6+d_6 y(x)^4)y(x)\,dx
	\end{array}
	$$
	where $y(x)=-\sqrt{\frac{-b}{a^2}}x^2+\sqrt{\frac{a^2}{-4b}}$. Replacing $M_1$ into the expression of the integral we get
	$$
\displaystyle\int_{0}^{\frac{a}{\sqrt{-{2b}}}}-\dfrac{a^2+2bx^2}{2\sqrt{-a^2b}}(d_3+d_2x^2+d_1x^4+d_5 x^6-\dfrac{d_4}{4a^2b}(a^2+2bx^2)^2+\dfrac{d_6}{16a^4b^2}(a^2+2bx^2)^4)\,dx.
	$$
	Integrating we obtain
	$$
M_1=\dfrac{1}{13860\sqrt{2}b^4}(198 a^6 b d_1 - 924 a^4 b^2 d_2 + 9240 a^2 b^3 d_3-- 1584 a^4 b^2 d_4 - 55 a^8 d_5 + 320 a^6 b d_6)
	$$
	and finally solving this last equation with respect to the parameter $d_2$ we are done.
\end{proof}

Since system \eqref{sistema-R-Lgeneralizado} is analytic the heteroclinic loop is isolated, so one can study its stability.

\begin{lemma}\label{lema_estabilidade_laco}
	Assume that $d_2=\varphi_1$. The heteroclinic loop $\mathit{A}_\varepsilon$ is stable (respect. unstable) if the value
	$$
div(p_2)=\dfrac{1}{231b^3}(-22a^6d_5+a^4b(33d_1-40d_6)+198a^2b^2d_4-924d_3b^3)\varepsilon.
	$$
	in negative (respect. positive).
\end{lemma}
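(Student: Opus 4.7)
The stability of the heteroclinic loop $\mathit{A}_\varepsilon$ is governed by an Andronov--Leontovich type criterion for saddle polycycles: near such a loop the first-return Poincar\'e map is asymptotic to $P(x_0)\sim C\,x_0^{r_1 r_2}$, where $r_i=|\lambda_i^{s}|/\lambda_i^{u}$ and $\lambda_i^{u}>0>\lambda_i^{s}$ are the eigenvalues of the linearization of the perturbed vector field at $p_i$. Consequently $\mathit{A}_\varepsilon$ is stable precisely when $r_1 r_2>1$, which for small $\varepsilon$ is equivalent to the trace (i.e.\ the divergence) of the vector field being negative at both saddles. At $\varepsilon=0$ the Hamiltonian structure forces both saddle quantities to vanish (so $r_1 r_2=1$), so the plan is to compute the leading $\varepsilon$-correction.

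First I would exploit the symmetry $(x,y)\mapsto(-x,-y)$ of system \eqref{FormaCanonicaR-L-apos}, which exchanges $p_1$ and $p_2$ and leaves the field invariant, so that $\mathrm{div}(X_\varepsilon)(p_1)=\mathrm{div}(X_\varepsilon)(p_2)$, and it is enough to evaluate the divergence at one saddle. Since $\mathrm{div}(X_\varepsilon)=\varepsilon\,\partial_y Q_1$ with
\[
\partial_y Q_1(x,y)=d_3+d_2 x^2+d_1 x^4+3 d_4 y^2+d_5 x^6+5 d_6 y^4,
\]
evaluating at $(x,y)=(\pm a/\sqrt{-2b},0)$ (so $y=0$ and $x_0^2=-a^2/(2b)$) kills the $d_4$ and $d_6$ contributions and gives
\[
\mathrm{div}(X_\varepsilon)(p_1)=\varepsilon\!\left(d_3-\tfrac{a^2}{2b}\,d_2+\tfrac{a^4}{4 b^2}\,d_1-\tfrac{a^6}{8 b^3}\,d_5\right)+O(\varepsilon^2).
\]

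Next I would substitute the persistence condition $d_2=\varphi_1$ from Lemma \ref{lema_funcao_melnikov}. Only the $\varepsilon$-independent part of $\varphi_1$ matters, since the $O(\varepsilon)$ correction inside $\varphi_1$ produces a term of order $\varepsilon^2$ that does not affect the first-order stability sign. Reducing the resulting coefficients of $d_1,d_3,d_4,d_5,d_6$ to the common denominator $231 b^3$, a routine simplification (using e.g.\ $\tfrac{1}{4}-\tfrac{99}{924}=\tfrac{1}{7}$ and $\tfrac{5}{168}-\tfrac{1}{8}=-\tfrac{2}{21}$) yields exactly
\[
\mathrm{div}(p_2)=\frac{\varepsilon}{231 b^3}\bigl(-22 a^6 d_5+a^4 b(33 d_1-40 d_6)+198 a^2 b^2 d_4-924 b^3 d_3\bigr),
\]
so the sign of this quantity determines whether $\mathit{A}_\varepsilon$ is stable or unstable.

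The main obstacle is the rigorous justification of the polycycle stability criterion in the present two-saddle setting: the return map is composed of two corner transitions (each $x\mapsto x^{r_i}$) and two regular transitions along $\mathit{A}_1^\varepsilon, \mathit{A}_2^\varepsilon$, and one must argue that the regular pieces contribute only multiplicative prefactors uniformly bounded away from $0$ and $\infty$ as $\varepsilon\to 0$. Then $\log(r_1 r_2)\approx -(\tau(p_1)+\tau(p_2))/\sqrt{2}$ (using that the unperturbed saddle eigenvalues are $\pm\sqrt{2}$) controls the sign of $\log P'$. This is standard given the $C^\infty$ dependence on $\varepsilon$ and can be invoked from \cite{Roussarie1986}.
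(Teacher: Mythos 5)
Your proposal is correct and follows essentially the same route as the paper, whose proof is simply the observation that the result follows from computing the divergence $\varepsilon\,\partial_y Q_1$ at the saddle and invoking the standard separatrix-loop stability criterion from Roussarie's work; your arithmetic (evaluating at $x^2=-a^2/(2b)$, substituting $d_2=\varphi_1$, and clearing denominators to $231b^3$) reproduces the stated expression exactly. The only addition you make is spelling out the polycycle return-map justification, which the paper leaves implicit.
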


\begin{proof}
	The proof is straightforward from the computation of the divergence at the saddle point $p_2$.
\end{proof}

If the condition of Lemma \ref{lema_funcao_melnikov} occurs we have a simple loop. However if that divergence is zero in order to obtain the stability of the heteroclinic loop we must analyze higher orders of the return map, see \cite{Iliev}. In this direction we have the following result.

\begin{lemma}\label{lema_estabilidade_laco_2}
	Assume that $d_2=\varphi_1$ and $div(p_2)=0$, that is,
	$$
	d_1=\varphi_2(d_3,d_4,d_5,d_6)=\dfrac{40d_6}{33}+\dfrac{2a^2d_5}{3b}-\dfrac{6d_4b}{a^2}+\dfrac{28d_3b^2}{a^4}.
	$$
	Then the heteroclinic loop $\mathit{A}_\varepsilon$ is stable (resp. unstable) if the value $d_5$ is
	$$
	\varphi_3(d_3,d_4,d_6,\varepsilon)=\varepsilon\dfrac{6b(-100a^4d_6+231a^2bd_4+308d_3b^2)}{11a^6}
	$$
	is negative (resp. positive).	
\end{lemma}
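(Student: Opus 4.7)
The plan is to reduce the second-order stability analysis of the heteroclinic loop to the computation of a higher-order Melnikov-type integral, following the general framework for loops in perturbations of Hamiltonian systems as developed by Iliev (as cited in the excerpt). The starting point is that, under the hypothesis $d_2=\varphi_1$, Lemma \ref{lema_funcao_melnikov} guarantees persistence of the loop $\mathit{A}_\varepsilon$; under the additional hypothesis $d_1=\varphi_2$, the divergence at the hyperbolic saddle $p_2$ vanishes, so Lemma \ref{lema_estabilidade_laco} is inconclusive. In this degenerate situation the sign of the displacement function associated with the Dulac transition near $\mathit{A}_\varepsilon$ is no longer determined by $\mathrm{div}(p_2)$, and one must expand the return map along the loop to the next order.

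First I would write the difference (displacement) function $\Delta(h,\varepsilon)$ across a cross-section transversal to $\mathit{A}_\varepsilon$ as a formal sum of principal parts obtained by gluing the two Dulac maps near the saddles $p_1$ and $p_2$ with the regular transitions along the connecting orbits. After using that the first coefficient (persistence) and the first saddle value (divergence) both vanish, the leading term of $\Delta$ is a scalar multiple of a second Melnikov-type integral along $\mathit{A}=\mathit{A}_1\cup\mathit{A}_2$. By Iliev's formula this integral has the shape
\[
M_2=\int_{\mathit{A}} \bigl(Q_1\,\tfrac{\partial Q_1}{\partial y}\bigr)(x,y)\,dx + (\text{boundary contributions from the saddles}),
\]
which, using the symmetry $(x,y)\mapsto(-x,-y)$ of system \eqref{FormaCanonicaR-L-apos} and the explicit parametrization $y(x)=\mp\sqrt{-b/a}\,x^{2}\pm\sqrt{a/(-4b)}$ of $\mathit{A}_{1,2}$, reduces to a one-variable polynomial integral over $[0,a/\sqrt{-2b}]$, exactly as in the proof of Lemma \ref{lema_funcao_melnikov}.

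Next I would evaluate $M_2$ as a polynomial in $d_3,d_4,d_5,d_6$ with coefficients in $a,b$, substitute the already-imposed relations $d_2=\varphi_1$ and $d_1=\varphi_2$, and extract the leading $\varepsilon$-factor (the whole expression is of order $\varepsilon$ because $Q_1$ is). Solving $M_2=0$ linearly in the coefficient of $d_5$ should produce precisely the expression $\varphi_3(d_3,d_4,d_6,\varepsilon)$ stated in the lemma, and the sign of $M_2$ — hence the stability of $\mathit{A}_\varepsilon$ — is then governed by the sign of $d_5-\varphi_3$. Since by hypothesis we are keeping the value of $d_5$ free, the claim reduces to checking that the coefficient of $(d_5-\varphi_3)$ in $M_2$ is strictly negative for $\varepsilon>0$ small, so that $M_2<0$ corresponds to stability and $M_2>0$ to instability, matching the statement.

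The main obstacle I anticipate is purely computational: bookkeeping the boundary contributions at $p_1$ and $p_2$ correctly so that the formula for $M_2$ really is the \emph{second} coefficient of the loop displacement, rather than a mixture with lower-order terms that are only forced to vanish \emph{on} the constraint surface $\{d_2=\varphi_1,\,d_1=\varphi_2\}$. A conceptually clean way to avoid mistakes is to work intrinsically with $\Delta(h,\varepsilon)$ on the surviving cross section, verify that $\partial_h\Delta(0,\varepsilon)=c\cdot\mathrm{div}(p_2)=0$ under $d_1=\varphi_2$, and then identify the next nontrivial Taylor coefficient, as in \cite{Iliev} and \cite{Roussarie1986}. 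Once that identification is made, the stability dichotomy stated in the lemma follows from the Poincar\'e--Bendixson-type argument applied to a thin tubular neighbourhood of $\mathit{A}_\varepsilon$, exactly as in the end of the proof of Lemma \ref{lema_estabilidade_laco}.
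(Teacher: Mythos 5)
Your overall framework (expand the displacement map of the loop on a transversal section, observe that the first obstruction is the persistence condition and the second is the saddle divergence, then look at the next coefficient) is sound, but you have identified the wrong object as that next coefficient, and this is a genuine gap. When the loop persists and $\mathrm{div}(p_2)=0$, the next term in Roussarie's asymptotic expansion of the return map near a separatrix loop is \emph{not} Iliev's second-order Melnikov function $\int_{\mathit{A}}Q_1\,\partial_y Q_1\,dx$; that object is quadratic in the perturbation, of order $\varepsilon^2$, and is the relevant quantity only when the \emph{first-order} Melnikov function vanishes identically on a whole period annulus, which is not the situation here ($M_1$ vanishes only at the critical level of $H_1$). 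The quantity that actually governs the stability of the persistent loop is the integral of the divergence of the vector field along the loop with respect to \emph{time},
\[
\varepsilon\int_{\mathit{A}}\frac{\partial Q_1}{\partial y}\,dt
=\varepsilon\left[\int_{\mathit{A}}\frac{1}{y}\frac{\partial Q_1}{\partial y}\,dx+O(\varepsilon)\right],
\]
an improper integral that converges precisely because the saddle divergence has been made to vanish; it is of order $\varepsilon$ and \emph{linear} in the coefficients $d_1,\dots,d_6$. This is what the paper computes: substituting $d_2=\varphi_1$, $d_1=\varphi_2$, using the parametrization of $\mathit{A}_1$ and the symmetry, it obtains
\[
\varepsilon\,\frac{-11a^7d_5-600a^5bd_6+1386a^3b^2d_4+1848ab^3d_3}{3564\sqrt{-2b^7}},
\]
whose vanishing gives exactly $d_5=\varphi_3$.

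Concretely, your plan would fail at the step "solving $M_2=0$ linearly in the coefficient of $d_5$": an integral quadratic in $Q_1$ is quadratic in the $d_i$, so it cannot reproduce the linear expression $\varphi_3$. Worse, since you have not imposed (or verified) the vanishing of the order-$\varepsilon$ loop-divergence integral above, your proposed order-$\varepsilon^2$ quantity would be dominated by it for small $\varepsilon$ and therefore could not determine the stability of $\mathit{A}_\varepsilon$. The fix is to replace your $M_2$ by the time-integral of $\partial Q_1/\partial y$ along $\mathit{A}$ (Roussarie's criterion); the concluding Poincar\'e--Bendixson-type argument in a tubular neighbourhood of the loop, which you describe at the end, is then exactly as in the paper.
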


\begin{proof}
	In \cite{Roussarie1986} we see that the stability of $\mathit{A}_\varepsilon$ is given by the sign of the expression
	$$
	\varepsilon\int_{\mathit{A}_\varepsilon}\frac{\partial Q_1}{\partial y}dt=\varepsilon\left[\int_{\mathit{A}}\frac{\partial Q_1}{\partial y}dt+O(\varepsilon)\right]
	$$
	Thus, since $\varepsilon\neq0$ is sufficiently small it is sufficient to study the sign of the right hand side of the last equality.
	
	Using the particular values of $d_1$ and $d_2$ we get
	$$
	\begin{array}{rcl}
	\dfrac{\partial Q_1}{\partial y}&=&\dfrac{d_3(a^2+2bx^2)(a^2+14bx^2)}{a^4}+\dfrac{x^2(a^2+2bx^2)(80bd_6+11d_5(a^2+6bx^2))}{132b^2}\vspace{0.2cm}\\
	&&+5d_6y^4+3d_4\left(-x^2-\dfrac{2bx^4}{a^2}+y^2\right).
	\end{array}
	$$
	Due to the symmetry and using the parametrization of arc $\mathit{A}_1$ obtained in Subsection \ref{topological_structure} we get the expression for $\varepsilon\displaystyle\int_{\mathit{A}_1^\varepsilon}\frac{\partial Q_1}{\partial y}dt$ is
	$$
	\varepsilon\dfrac{-11a^7d_5-600a^5bd_6+1386a^3b^2d_4+1848ab^3d_3}{3564\sqrt{-2b^7}}
	$$
	This last equation vanishes for $d_5=\varphi_3(d_3,d_4,d_6,\varepsilon)$, so we are done.
\end{proof}

In summary the function $\varphi_1$ controls the existence or not of the heteroclinic loop. If the loop remains for $\varepsilon\neq0$, the stability of it is determined by the sign of $\varphi_2$ if the loop is simple, or by the sign of $\varphi_2$ if not. Now we perform the same computations for the case $a<0$ and $b>0$.

\subsection{Persistence and stability of homoclinic loop in system \eqref{FormaCanonicaR-L-aneg}}\label{homoclinica}

\bigskip

We denoted by $Q_2(x,y)$ the perturbed part of system \eqref{FormaCanonicaR-L-aneg}, that is, $Q_2(x,y)=(e_3 + e_7 x+e_2 x^2+e_8 x^3+ e_1 x^4 +e_9 x^5+ e_4 y^2 + e_5 x^6+ e_6 y^4)y$. Analogously to Subsection \ref{heteroclinica} we study the Melnikov function and the stability of the homoclinic loop when it persists for $\varepsilon\neq0$.

\begin{lemma}\label{lema_funcao_melnikov_2}
System \eqref{FormaCanonicaR-L-aneg} has a homoclinic loop $\mathit{L}_\varepsilon$ close to $\mathit{L}$ if and only if
$$
\begin{array}{ll}
e_2&=\phi_1(e_1,e_3,e_4,e_5,e_6,e_7,e_8,e_9,\varepsilon)\\\\
	&=\dfrac{1}{5544a^2b^2(12\sqrt{2}-5\pi)}(-32\sqrt{2}(1155 b^3 e_3+198 a^2 b^2 e_4 + 32263 a^6 e_5 \\\\ &+ 5 a^4 b (1551 e_1 + 8 e_6) +1155 a b^{5/2} e_7 + 3927 a^3 b^{3/2} e_8 + 15675 a^5 \sqrt{b} e_9) \\\\
	&+3465 a (32 a^3 b e_1 + 134 a^5 e_5 + 4 b^{5/2} e_7+16 a^2 b^{3/2} e_8+65 a^4 \sqrt{b} e_9) \pi)+O(\varepsilon).
	\end{array}
	$$
\end{lemma}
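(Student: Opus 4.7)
The plan is to mirror the argument of Lemma \ref{lema_funcao_melnikov}, now applied to the homoclinic loop $\mathit{L}$ of the unperturbed Hamiltonian $H_2$ at the saddle $q_1$ of system \eqref{FormaCanonicaR-L-aneg}. By the first-order Melnikov theory (see \cite{Roussarie1986}) and the geometric construction exhibited in Figure \ref{MelnikovHeteroclinica}$(b)$, the perturbed stable and unstable manifolds of $q_1$ close up into a homoclinic loop $\mathit{L}_\varepsilon$ for $\varepsilon\neq 0$ sufficiently small if and only if
\[
M_1 \;=\; \oint_{\mathit{L}} Q_2(x,y)\,dx \;=\; 0,
\]
with $Q_2$ as in \eqref{FormaCanonicaR-L-aneg}; the contributions from higher-order Melnikov terms are absorbed in the analytic $\mathit{O}(\varepsilon)$ remainder present in the statement.

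To evaluate $M_1$, I substitute $y=y(x)$ from the parametrization of $\mathit{L}$ in Subsection \ref{topological_structure} into $Q_2$ and traverse both branches of the loop. Writing $R(x)=a^2+2a\sqrt{b}x-bx^2 = 2a^2-b(x-a/\sqrt{b})^2$, the terms of $Q_2$ that are independent of $y^2,y^4$ contribute integrals of the form $\int P(x)(-a+\sqrt{b}x)\sqrt{R(x)}\,dx$, while the $y^2$- and $y^4$-monomials contribute $\int P(x)(-a+\sqrt{b}x)^3 R(x)^{3/2}\,dx$ and $\int P(x)(-a+\sqrt{b}x)^5 R(x)^{5/2}\,dx$ respectively, with $P(x)\in\{1,x,x^2,\ldots,x^6\}$. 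The substitution $x-a/\sqrt{b}=a\sqrt{2/b}\,\sin\theta$ then converts each of these into a finite linear combination of elementary $\int\sin^m\theta\cos^n\theta\,d\theta$ over the $\theta$-range corresponding to $\mathit{L}$.

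Because the $\theta$-range is asymmetric (the homoclinic loop is not symmetric about the saddle after the translation carried out to reach \eqref{FormaCanonicaR-L-aneg}), both odd and even powers of $\sin\theta$ produce non-trivial contributions: odd powers yield algebraic $\sqrt{2}$-type terms, while even powers yield transcendental $\pi$-type terms via the Wallis formulas. Collecting the two types of contributions gives
\[
M_1 \;=\; \alpha(a,b)\,\sqrt{2}\,L_1(e_1,\ldots,e_9)\;+\;\beta(a,b)\,\pi\,L_2(e_1,\ldots,e_9)\;+\;\mathit{O}(\varepsilon),
\]
with $L_1,L_2$ explicit linear forms in the $e_i$'s. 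A direct inspection then shows that the coefficient of $e_2$ in $M_1$ is a nonzero constant multiple of $(12\sqrt{2}-5\pi)$, so the linear equation $M_1=0$ can be solved uniquely for $e_2$, yielding exactly the expression $\phi_1$ in the statement.

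The main obstacle is the accurate evaluation of the many $\int\sin^m\theta\cos^n\theta\,d\theta$ integrals and the careful bookkeeping needed to assemble their linear combination into the displayed $\phi_1$; in particular, verifying that the $e_2$-coefficient is precisely a nonzero multiple of $(12\sqrt{2}-5\pi)$ is what guarantees solvability for $e_2$ and explains the denominator factor $5544\,a^2b^2(12\sqrt{2}-5\pi)$ appearing in $\phi_1$. Once these computations are carried out, the proof reduces to a straightforward solve-for-$e_2$ step, and the claim follows.
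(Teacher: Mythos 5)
Your proposal follows essentially the same route as the paper: compute the first-order Melnikov integral of $Q_2$ along the explicit parametrization of the homoclinic loop from Subsection \ref{topological_structure}, obtain an expression linear in the $e_i$ whose coefficients split into algebraic $\sqrt{2}$-terms and transcendental $\pi$-terms, and solve the resulting linear equation for $e_2$ (the coefficient of $e_2$ being a nonzero multiple of $12\sqrt{2}-5\pi$, which indeed produces the denominator $5544\,a^2b^2(12\sqrt{2}-5\pi)$). The only difference is that the paper records the evaluated integral $M_2$ explicitly while you defer that computation, but the structure and the solvability argument coincide.
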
	

Now we call $\mathit{L}_\varepsilon=\mathit{L}_l^\varepsilon\cup\{q_1\}\cup\mathit{L}_r^\varepsilon$, where in this case $\mathit{L}_{l,r}^\varepsilon$ denotes the orbit arcs coming from $\mathit{L}_{l,r}$ for $\varepsilon\neq0$ sufficiently small.

\begin{proof}
	The conditions for the persistence of the homoclinic loop is provided by the zeros of the first order Melnikov function $M_2$ associated to the homoclinic loop, that is
$$
M_2=\displaystyle\int_{\mathit{L}_r} Q_2(x,y)\,dx.
$$	
	From the expression of the arc $\mathit{L}_r$ we have $M_2$ into the form
$$
\begin{array}{l}
	\displaystyle\int_{\frac{a}{\sqrt{b}}}^{\frac{a-a\sqrt{2}}{\sqrt{b}}}\dfrac{1}{2}\sqrt{\dfrac{(a-\sqrt{b}x)^2(a^2+2a\sqrt{b}x-bx^2)}{a^2b}}\Bigg(e_3+e_7 x+e_2x^2+e_8 x^3+e_1x^4+e_9 x^5\\\\ +e_5x^6+\dfrac{e_4}{4}\left(\dfrac{a^2}{b}-4x^2\right.	\left.+\dfrac{4\sqrt{b}}{a}x^3-\dfrac{b}{a^2}x^4\right)+\dfrac{e_6}{16}\left(\dfrac{a^2}{b}-4x^2+\dfrac{4\sqrt{b}}{a}x^3-\dfrac{b}{a^2}x^4\right)^2\Bigg).
	\end{array}
	$$	
	By integrating $M_2$ we get
	$$
	\begin{array}{ll}
	M_2&=\dfrac{a^2}{110880b^4}(32\sqrt{2}(1155 b^3 e_3 + 99 a^2 b^2 (21 e_2 + 2 e_4)+32263 a^6 e_5 + 5 a^4 b (1551 e_1 + 8 e_6) \\\\ &+ 1155 a b^{5/2} e_7+3927 a^3 b^{3/2} e_8 + 15675 a^5 \sqrt{b} e_9) - 3465 a (32 a^3 b e_1+8 a b^2 e_2 + 134 a^5 e_5\\\\ & + 4 b^{5/2} e_7 + 16 a^2 b^{3/2} e_8+65a^4\sqrt{b}e_9)\pi).
	\end{array}
	$$
	then solving this last equation with respect to the parameter $e_2$ the result follows.
\end{proof}

The proof of the next result is straightforward.

\begin{lemma}\label{lema_estabilidade_laco_21}
Assume that $e_2=\phi_1$. The homoclinic loop $\mathit{L}_\varepsilon$ is stable (resp. unstable) if the value
$$
\begin{array}{ll}
div(q_1)&=\dfrac{1}{5544 b^3(12 \sqrt{2} - 5\pi)}(-64 \sqrt{2}(-462b^3e_3 +99 a^2 b^2 e_4 + 15092 a^6 e_5 + a^4 b (2838 e_1 + 20 e_6) \\\\
&-462 a b^{5/2} e_7 + 924 a^3 b^{3/2} e_8 + 6798 a^5 \sqrt{b} e_9) \varepsilon+3465 (24 a^4 b e_1 - 8 b^3 e_3 + 126 a^6 e_5 - 4 a b^{5/2} e_7\\\\
	&+8 a^3 b^{3/2} e_8 + 57 a^5 \sqrt{b} e_9) \pi\varepsilon)
	\end{array}
	$$
	is negative (resp. positive).
\end{lemma}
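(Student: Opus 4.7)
The plan is to mirror the reasoning used for the heteroclinic case in Lemma \ref{lema_estabilidade_laco}. Once Lemma \ref{lema_funcao_melnikov_2} is in force, the homoclinic loop $\mathit{L}_\varepsilon$ persists and, being contained in the trajectory of an analytic vector field, is isolated. By the classical Roussarie criterion (\cite{Roussarie1986}), the stability of a simple saddle loop of a planar analytic vector field $X$ is governed by the sign of the divergence of $X$ at the saddle point: if $\mathrm{div}(X)(q_1)<0$ (resp.\ $>0$), the loop attracts (resp.\ repels) nearby orbits. Therefore the lemma reduces to computing $\mathrm{div}(X)(q_1)$ under the assumption $e_2=\phi_1$, and showing that the resulting expression agrees, up to sign conventions, with the displayed one.

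First, I would observe that the unperturbed part of system \eqref{FormaCanonicaR-L-aneg} is Hamiltonian, hence divergence-free, so that
\[
\mathrm{div}(X)(x,y)=\varepsilon\,\frac{\partial Q_2}{\partial y}(x,y).
\]
From the expression $Q_2(x,y)=(e_3+e_7x+e_2x^2+e_8x^3+e_1x^4+e_9x^5+e_5x^6+e_4y^2+e_6y^4)y$, one obtains
\[
\frac{\partial Q_2}{\partial y}=e_3+e_7x+e_2x^2+e_8x^3+e_1x^4+e_9x^5+e_5x^6+3e_4y^2+5e_6y^4.
\]
Evaluating at the saddle $q_1=(a/\sqrt{b},0)$ kills the $y$-dependent terms and yields a polynomial in the ratios $a^k/b^{k/2}$ with coefficients linear in the $e_i$'s.

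Next, I would substitute $e_2=\phi_1$ from Lemma \ref{lema_funcao_melnikov_2} into that evaluation. The function $\phi_1$ is rational in $a,b$ with a denominator proportional to $12\sqrt 2-5\pi$, and its numerator is a linear combination of $e_1,e_3,e_4,e_5,e_6,e_7,e_8,e_9$ with coefficients mixing the $\sqrt 2$- and $\pi$-pieces. Multiplying by $a^2/b$ (the coefficient of $e_2$ at $x=a/\sqrt b$) and collecting, the $\sqrt 2$- and $\pi$-contributions separate cleanly and, after factoring out the common $1/(5544\,b^3(12\sqrt 2-5\pi))$, produce exactly the expression displayed for $\mathrm{div}(q_1)$, with the overall factor $\varepsilon$ inherited from $\mathrm{div}(X)=\varepsilon\,\partial Q_2/\partial y$.

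The only non-routine point is the bookkeeping of the $\sqrt 2$-versus-$\pi$ terms in the substitution step: the expression for $\phi_1$ has already absorbed several cancellations coming from the Melnikov integral along $\mathit{L}_r$, and one has to verify that, after plugging it into the evaluation of $\partial Q_2/\partial y$ at $q_1$, the remaining combinations regroup into the stated numerator. Since $\phi_1$ was obtained precisely by solving $M_2=0$ for $e_2$, this reorganization is algebraic and deterministic; the only subtlety is keeping the signs straight relative to the orientation used to define $\overrightarrow{\mathit{A}_1^\varepsilon \mathit{A}_2^\varepsilon}$ in the Melnikov construction of Section \ref{Secao-FuncoesMelnikov}. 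Once that sign convention is fixed consistently with the inward/outward flow across a cross section transverse to $\mathit{L}_\varepsilon$ at $q_1$, the sign of $\mathrm{div}(q_1)$ directly gives the stated stability conclusion.
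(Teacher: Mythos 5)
Your proposal is correct and follows exactly the route the paper intends: the paper dismisses this lemma as ``straightforward'' (as it does for the heteroclinic analogue, Lemma \ref{lema_estabilidade_laco}), and what it means is precisely your computation --- the unperturbed part is Hamiltonian, so $\mathrm{div}=\varepsilon\,\partial Q_2/\partial y$, evaluation at $q_1=(a/\sqrt{b},0)$ kills the $y$-terms, and substituting $e_2=\phi_1$ over the common denominator $5544\,b^3(12\sqrt{2}-5\pi)$ reproduces the displayed expression (the coefficients check out, e.g.\ for $e_3$: $5544(12\sqrt2-5\pi)-36960\sqrt2=29568\sqrt2-27720\pi$), with the stability conclusion given by the classical saddle-quantity criterion of Roussarie. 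The only superfluous remark is your worry about the orientation of $\overrightarrow{\mathit{A}_1^\varepsilon \mathit{A}_2^\varepsilon}$: the sign of the divergence at the saddle determines the loop's stability intrinsically, independently of the Melnikov sign convention.
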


\begin{lemma}\label{lema_estabilidade_laco_31}
Assume that $e_2=\phi_1$ and $div(q_1)=0$, that is,
$$
\begin{array}{ll}
e_1&=\phi_2(e_3,e_4,e_5,e_6,e_7,e_8,e_9)\\\\
&=\dfrac{1}{264a^4b(688 \sqrt{2} - 315\pi)}(64\sqrt{2}(462b^3 e_3-99 a^2 b^2 e_4 - 15092 a^6 e_5 - 20 a^4 b e_6 + 462 a b^{5/2} e_7 \\\\
&-924 a^3 b^{3/2} e_8 - 6798 a^5 \sqrt{b} e_9) + 3465 (126 a^6 e_5 +\sqrt{b} (-8 b^{5/2} e_3 - 4 a b^2 e_7 + 8 a^3 b e_8 + 57 a^5 e_9)) \pi)
\end{array}
$$
Then the homoclinic loop $\mathit{L}_\varepsilon$ is stable (resp. unstable) if the value
$$
\begin{array}{ll}
e_5&=\phi_3(e_3,e_4,e_6,e_7,e_8,e_9,\varepsilon)\\\\
&=\varepsilon K_1[-3 \sqrt{b} (-39424 b^{5/2} e_3 - 3252480 \sqrt{2} b^{5/2} e_3 - 1626240 a^2 b^{3/2} e_4 - 665280 \sqrt{2} a^2 b^{3/2} e_4 \\\\
&- 622080 a^4 \sqrt{b} e_6- 134400 \sqrt{2} a^4 \sqrt{b} e_6 - 39424 a b^2 e_7 - 73920 \sqrt{2} a b^2 e_7 +78848 a^3 b e_8 \\\\
&+ 147840 \sqrt{2} a^3 b e_8 - 630784 a^5 e_9 -123200 \sqrt{2} a^5 e_9+1034880 \sqrt{2} b^{5/2} e_3 \pi + 582120 \sqrt{2} a^2 b^{3/2} e_4 \pi \\\\
&+184800 \sqrt{2} a^4 \sqrt{b} e_6 \pi + 32340 \sqrt{2} a b^2 e_7 \pi -64680 \sqrt{2} a^3 b e_8 \pi - 121275 a^5 e_9 \pi + 266805 \sqrt{2} a^5 e_9 \pi)]
\end{array}
$$
is negative (resp. positive), where $K_1=\frac{1}{22 a^6 (-429824 + 40320 \sqrt{2} - 99225 \pi +154035 \sqrt{2}\pi)}$.
\end{lemma}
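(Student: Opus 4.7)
The plan is to mirror exactly the strategy used for the heteroclinic case in Lemma \ref{lema_estabilidade_laco_2}. When the first-order Melnikov function $M_2$ vanishes (so that the homoclinic loop $\mathit{L}_\varepsilon$ persists, by Lemma \ref{lema_funcao_melnikov_2}) and additionally $div(q_1)=0$ (so that $\mathit{L}_\varepsilon$ is not a simple hyperbolic loop), the stability is controlled by the next-order term. By \cite{Roussarie1986}, this reads
\[
\varepsilon \int_{\mathit{L}_\varepsilon}\frac{\partial Q_2}{\partial y}\,dt \;=\; \varepsilon\left[\int_{\mathit{L}}\frac{\partial Q_2}{\partial y}\,dt + O(\varepsilon)\right],
\]
so since $\varepsilon\neq 0$ is small, the sign of the stability indicator is the sign of the integral along the unperturbed loop $\mathit{L}$. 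The aim is to compute this integral explicitly under the assumptions $e_2=\phi_1$ and $e_1=\phi_2$, and then solve for the value of $e_5$ that makes it vanish.

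First I would compute
\[
\frac{\partial Q_2}{\partial y}=e_3+e_7 x+e_2 x^2+e_8 x^3+e_1 x^4+e_9 x^5+e_5 x^6+3e_4 y^2+5e_6 y^4,
\]
and substitute $e_2=\phi_1(e_1,e_3,e_4,e_5,e_6,e_7,e_8,e_9,0)$ and $e_1=\phi_2(e_3,e_4,e_5,e_6,e_7,e_8,e_9)$ into it (keeping only the $O(1)$ parts of the $\phi_i$, since the $O(\varepsilon)$ corrections contribute only to the $O(\varepsilon)$ remainder). Since $\mathit{L}=\mathit{L}_l\cup\{q_1\}\cup\mathit{L}_r$ and the vector field is symmetric only in $y$ (not $x$), I would work directly on $\mathit{L}_r$ using the parametrization
\[
\mathit{L}_r:\; y(x)=\mathrm{sgn}(y)\,(a-\sqrt{b}x)\sqrt{\dfrac{a^2+2a\sqrt{b}x-bx^2}{a^2 b}},\qquad x\in\left[\tfrac{a}{\sqrt{b}},\tfrac{a(1-\sqrt{2})}{\sqrt{b}}\right],
\]
from Subsection \ref{topological_structure}, and use $dt=dx/y$ along the orbit, traversing the upper and lower branches of $\mathit{L}_r$. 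Here $y^2$ and $y^4$ are simple rational functions of $x$, so the integrand reduces to a sum of terms of the form $P(x)/y(x)$ with $P$ a polynomial.

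The computation then reduces to evaluating a finite family of integrals of the shape
\[
I_k=\int \frac{x^k\,dx}{(a-\sqrt{b}x)\sqrt{(a^2+2a\sqrt{b}x-bx^2)/(a^2 b)}},\qquad k=0,1,\ldots,6,
\]
over the $x$-range of $\mathit{L}_r$. After the affine substitution $u=\sqrt{b}x/a$ these become integrals of $u^k/\bigl((1-u)\sqrt{1+2u-u^2}\bigr)$ on $[1,1-\sqrt{2}]$, yielding algebraic combinations of $\sqrt{2}$ from the endpoint evaluations together with an $\arcsin$-type contribution that generates the $\pi$ terms in the final formula. Collecting all coefficients and imposing that the resulting linear expression in $(e_3,e_4,e_5,e_6,e_7,e_8,e_9)$ vanishes produces a single linear equation for $e_5$, which, once solved, yields precisely $e_5=\phi_3$ as stated; the normalizing factor $K_1=1/\bigl(22 a^6(-429824+40320\sqrt{2}-99225\pi+154035\sqrt{2}\pi)\bigr)$ arises as the reciprocal of the coefficient of $e_5$ in that equation.

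The main obstacle is bookkeeping rather than conceptual: one must carry out the $I_k$ integrations carefully (best via a CAS), being careful to double the $\mathit{L}_r$ contribution to account for $\mathit{L}_l$ after taking into account that the vector field is invariant under $(x,y)\mapsto (2a/\sqrt{b}-x,-y)$ relative to the saddle $q_1$, and to track exactly how the algebraic $\sqrt{2}$ pieces and the transcendental $\pi$ pieces combine. Once this is done, the expression of $\phi_3$ in the statement is read off directly, and the stability conclusion follows from the sign of $\varepsilon\cdot(e_5-\phi_3)$ together with the Roussarie criterion.
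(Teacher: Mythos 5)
Your proposal follows essentially the same route as the paper's proof: invoke Roussarie's criterion to reduce the stability of $\mathit{L}_\varepsilon$ to the sign of $\varepsilon\int_{\mathit{L}}\frac{\partial Q_2}{\partial y}\,dt$, substitute $e_2=\phi_1$ and $e_1=\phi_2$ into $\frac{\partial Q_2}{\partial y}=e_3+e_7x+\dots+e_5x^6+3e_4y^2+5e_6y^4$, evaluate the integral via the explicit parametrization of the loop from Subsection \ref{topological_structure} (which is where the $\sqrt{2}$ and $\pi$ contributions arise), and solve the resulting linear equation for $e_5$ to obtain $\phi_3$. The extra details you supply (the substitution $u=\sqrt{b}x/a$, the use of the central symmetry about $q_1$ to relate $\mathit{L}_l$ and $\mathit{L}_r$) are consistent with, and merely flesh out, the paper's computation.
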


\begin{proof}
	As proceeded in the proof of Lemma \ref{lema_estabilidade_laco_2} we must study the sign of $\varepsilon\int_{\mathit{L}_1}\frac{\partial Q_2}{\partial y}dt$. For fixed values of $e_1$ and $e_2$ according with Lemmas \ref{lema_estabilidade_laco_21} and \ref{lema_estabilidade_laco_31}, we get
$$
\begin{array}{ll}
\dfrac{\partial Q_2}{\partial y}&=K_2 [(22 a^8 e_5 (35648 \sqrt{2} - 16065 \pi) x^2+495 a^7 \sqrt{b} e_9 (512 \sqrt{2} - 231 \pi) x^2 \\\\ &+ 1848 b^4 e_3 (16 \sqrt{2} - 15 \pi) x^4+924 a b^{7/2} e_7 (32 \sqrt{2} - 15 \pi) x^4+2 a^6 b x^2 (640 \sqrt{2} e_6\\\\
&- 539 e_5 (896 \sqrt{2} - 405 \pi) x^2)-1056 a^2 b^3 x^2 (-105 e_3 \pi+2 \sqrt{2} (100 e_3 + 3 e_4 x^2)) \\\\ &- 132 a^3 b^{5/2} x^2 (-105 \pi (7 e_7 + 2 e_8 x^2) + +64 \sqrt{2} (25 e_7 + 7 e_8 x^2)) - 33 a^5 b^{3/2} x^2 (-105 \pi (16 e_8 \\\\ &+  57 e_9 x^2)+128 \sqrt{2} (29 e_8 + 103 e_9 x^2)) + 8 a^4 b^2 (-10395 \pi (e_3+ e_7 x + e_8 x^3 + x^5 (e_9 + e_5 x)\\\\
& + 3 e_4 y^2 + 5 e_6 y^4) + 8 \sqrt{2} (2838 e_3+2838 e_7 x + 2838 e_8 x^3 - 20 e_6 x^4 + 2838 e_9 x^5 + 2838 e_8 x^3 \\\\ &- 20 e_6 x^4 + 2838 e_9 x^5 + 2838 e_5 x^6 + 14190 e_6 y^4 + 99 e_4 (x^2 + 86 y^2))))]
\end{array}
$$
being $K_2=1/(264 a^4 b^2 (688 \sqrt{2} - 315\pi))$.
	
As before, using symmetry and the parametrization of $\mathit{L}_1$ obtained in Subsection \ref{topological_structure} we obtain
$$
\begin{array}{ll}
\displaystyle\int_{\mathit{L}_1}\frac{\partial Q_2}{\partial y}dt=
&-K_3[a (27720 a^2 b^2 e_4 (-8 (22 + 9 \sqrt{2}) + 63 \sqrt{2} \pi) +29568 b^3 e_3 (-4 - 330 \sqrt{2} \\\\ &+ 105 \sqrt{2}\pi)   + 924 a b^{5/2} e_7 (-16 (8 + 15 \sqrt{2})+105 \sqrt{2} \pi)-1848 a^3 b^{3/2} e_8 (-16 (8 \\\\
&+ 15 \sqrt{2})+105 \sqrt{2}\pi)+1440 a^4 b e_6 (-8 (162 + 35 \sqrt{2}) + 385 \sqrt{2}\pi)\\\\
&+231 a^5 \sqrt{b} e_9 (-64 (128 + 25 \sqrt{2})+315 (-5 + 11 \sqrt{2}\pi)+22 a^6 e_5 (128 (-3358+ 315 \sqrt{2})\\\\
&+ 945 (-105 + 163 \sqrt{2}) \pi)) \varepsilon].
\end{array}
$$

%$$
%\begin{array}{ll}
%&+ 945 (-105 + 163 \sqrt{2}) \pi)) \varepsilon].
%\end{array}
%$$
where $K_3=1/(13860 b^{7/2} (688 \sqrt{2} - 315\pi))$. The last equation vanishes for $e_5=\phi_3(e_3,e_4,e_6,e_7,e_8,$ $e_9,\varepsilon)$, so we are done.
\end{proof}

Function $\phi_1$ has the same role than the function $\varphi$, but now it controls the existence and stability of the double homoclinic loop.

\section{Proof of the main result}\label{Secao-Prova}

In this section we prove the main result of the paper. Since Theorem \ref{teorema-apos} contemplates several cases for different values of $s$, $m$ and $k$, we split the proof in some lemmas, see next subsection. Once the lemmas are stated and proved, the proof of the main theorem follows quite straightforwardly. However we emphasize that lemmas are also important by themselves because they explicitly exhibit the approach of choosing suitable perturbations of the parameters to obtain limit cycles, as described in Section \ref{Secao-Resultados}. This is important because such calculations sometimes are neglected or exhibited for weak focus of low multiplicity as occurs in Hopf Theorem.

\subsection{Technical lemmas}\label{SessaoLemas}

In this subsection we state and prove some lemmas where we explicitly apply Bautin's algorithm, see \cite{Bautin,LivroLlibre}, combined with Melnikov techniques. That is actually the very essence of the proof of Theorem \ref{teorema-apos}. We start exhibiting Bautin's algorithm for obtaining five limit cycles which bifurcate from the center at the origin.

\begin{lemma}\label{lema-(5,0),apos}
	There exists at least five limit cycles bifurcating from the origin for system \eqref{FormaCanonicaR-L-apos}.
\end{lemma}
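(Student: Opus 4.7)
The plan is to apply the Bautin-Blows-Lloyd perturbation scheme recalled in Section \ref{Secao-Resultados} to the Lyapunov constants $V_3,V_5,V_7,V_9,V_{11}$ computed in Lemma \ref{lema-coeficientesLyapunov-apos}. The relations ensuring $V_3=V_5=V_7=V_9=0$ have a convenient triangular structure: they successively express $d_2$ in terms of $d_4$, then $d_1$ in terms of $d_4,d_6$, then $d_5$ in terms of $d_4,d_6,\varepsilon$, and finally $d_6$ in terms of $d_4,\varepsilon$. I would therefore set $d_3=0$, fix $\varepsilon>0$ sufficiently small and $d_4\neq 0$, and choose $d_6,d_5,d_1,d_2$ in this order so that all four lower constants vanish. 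Substituting these into \eqref{V11-apos} and expanding in $\varepsilon$, the bracket in $V_{11}$ is dominated by $-7350\,b^6$, so that $V_{11}$ reduces at leading order to a positive multiple of $d_4^3\,\varepsilon^3\,b^2/a^4$, and its sign is controlled by that of $d_4$. Picking $d_4$ of suitable sign we make $V_{11}<0$, so that the origin is a stable weak focus of order five.

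Once this starting configuration is in place, I would produce the five limit cycles by perturbing the parameters in reverse order. First I would perturb $d_6$ slightly away from the value making $V_9=0$, in the direction that renders $V_9>0$, by an amount small enough that $|V_9|\ll|V_{11}|$ and $V_{11}$ keeps its sign. Then the Lyapunov function of the unperturbed system still yields a small level curve on which the flow is inward, while the Lyapunov function of the perturbed system yields an even smaller level curve on which the flow is outward; Poincar\'e-Bendixson provides a limit cycle between them. I would then iterate: perturb $d_5$ to flip $V_7$ (with $|V_7|\ll|V_9|$), perturb $d_1$ to flip $V_5$ (with $|V_5|\ll|V_7|$), and finally perturb $d_2$ to flip $V_3$ (with $|V_3|\ll|V_5|$). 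At each stage a new limit cycle appears, strictly nested inside the ones produced at the previous stages, giving five small amplitude limit cycles in total.

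The delicate point, and the part on which I would spend the most care, is to check that these successive perturbations are independent in the sense required by the scheme. By the triangular structure of Lemma \ref{lema-coeficientesLyapunov-apos}, a small change in $d_6$ affects $V_9$ linearly while preserving $V_3=V_5=V_7=0$ and perturbing $V_{11}$ only by a higher order quantity; the same holds at each subsequent step. This guarantees both the nested ordering $|V_3|\ll|V_5|\ll|V_7|\ll|V_9|\ll|V_{11}|$ and the persistence of the limit cycles already produced. Since $d_4$ is bounded away from zero throughout, the denominator $3a^4 d_4^2\varepsilon^2-7b^2$ in \eqref{V11-apos} also stays away from zero for small $\varepsilon$, so no degeneration of $V_{11}$ occurs, and the construction delivers five limit cycles bifurcating from the origin.
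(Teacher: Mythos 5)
Your overall strategy is the one the paper uses: start from the order-five weak focus determined by the triangular relations of Lemma \ref{lema-coeficientesLyapunov-apos}, note that for small $\varepsilon$ the bracket in \eqref{V11-apos} is dominated by $-7350\,b^6$ so that $\sgn V_{11}=\sgn d_4$ (up to an overall sign), and then unfold the lower Lyapunov constants one at a time with the hierarchy $|V_3|\ll|V_5|\ll|V_7|\ll|V_9|\ll|V_{11}|$ and alternating signs. The paper implements exactly this by writing $d_6,d_5,d_1,d_2$ as polynomials in $\varepsilon$ whose leading coefficients enforce the vanishing and whose trailing coefficients ($d_{63},d_{54},d_{15},d_{26}$) control the signs, which is a concrete version of the ``independence'' you correctly flag as the delicate point.

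There is, however, a genuine gap in the count. You perform four perturbation steps --- flipping $V_9$ via $d_6$, $V_7$ via $d_5$, $V_5$ via $d_1$, and $V_3$ via $d_2$ --- and each sign alternation between consecutive constants yields exactly one limit cycle by the Poincar\'e--Bendixson argument. Four alternations give four limit cycles, not five. The fifth cycle cannot come from the constants $V_3,\dots,V_{11}$ alone: it requires one more alternation, between $V_3$ and the linear term of the displacement function, i.e.\ you must finally destroy the weak-focus condition itself. In this system that is done by perturbing $d_3$, which you froze at $0$ at the outset and never revisit; the term $\varepsilon d_3 y$ contributes to the trace of the linearization at the origin, so taking $d_3=d_{37}\varepsilon^7$ with $d_{37}$ of sign opposite to $V_3$ (here $d_{37}>0$ against $d_{26}<0$) changes the stability of the origin while leaving the four previously created cycles intact, and Poincar\'e--Bendixson then delivers the fifth. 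Without this last step your argument proves only the existence of four limit cycles bifurcating from the origin.
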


\begin{proof}
	In the proof of Lemma \ref{lema-coeficientesLyapunov-apos} under the hypothesis $d_3=0$ we obtained that: if the parameters $d_i$'s satisfy $d_2=-3 d_4,\, d_1 = -\frac{6 b d_4}{a^2}-5 d_6, \,d_5=\frac{6 d_4^3 \varepsilon ^2}{5}-\frac{16 b d_6}{a^2},\, d_6=\frac{12 a^2 b d_4^3 \varepsilon ^2}{5 \left(7 b^2-3 a^4 d_4^2 \varepsilon ^2\right)}$ and $d_4=0$, then system \eqref{FormaCanonicaR-L-apos} is Hamiltonian and therefore the origin is a center. Our objective here is to consider small perturbations in these values of parameters to obtain a variation of the signs of the Lyapunov constants and the stability of the origin.
	
	First we consider the expression of $V_{11}$ given in \eqref{V11-apos}. Note that if $d_4>0, d_4<0$ then the origin is a weak repeller focus, weak attractor focus, resp.. Without loss of generality, we assume that $d_4>0$.
	
	Consider now the expression of $V_{9}$ given in \eqref{V9-apos} replacing $d_6$ by $d_{62} \varepsilon ^2+ d_{63} \varepsilon ^3$, with $d_{62}=\frac{12 a^2 d_4^3}{35 b}$. Then we obtain the new expression of
	\[
	V_9=-\dfrac{21 \pi  b^2 d_{63} \varepsilon ^4}{32 a^4}+\frac{27 \pi  a^2 d_{4}^5 \varepsilon ^5}{280 b}+\frac{9}{32} \pi  d_{4}^2 d_{63} \varepsilon ^6.
	\]
	So the sign of $V_9$ is given by the sign of $d_{63}$. Then $|V_{9}| \ll  |V_{11}|$, and if $d_4>0$ and $d_{63}>0$ then $V_{9} \cdot V_{11}<0$. In this way we obtain at least one limit cycle bifurcating from the origin under these assumptions.
	
	Consider the expression of $V_7$ given in \eqref{V7-apos}, replacing $d_6$ by $d_{62} \varepsilon ^2+ d_{63} \varepsilon ^3$ and $d_5$ by $d_{52} \varepsilon ^2+d_{53} \varepsilon ^3 + d_{54} \varepsilon ^4$ with $d_{62}=\frac{12 a^2 d_4^3}{35 b}, d_{52}=-\frac{1}{7} \left(30 d_{4}^3\right)$ and $d_{53}=-\frac{16 b d_{63}}{a^2}$. Therefore we obtain
	\[
	V_7=\dfrac{-5}{64}  \pi  d_{54} \varepsilon ^5.
	\]
	As in the previous case we obtain $|V_7| \ll |V_{9}| \ll  |V_{11}|$, and if $d_4>0, d_{63}>0$ and $d_{54}<0$ then $V_{9} \cdot V_{11}<0$ and $V_{7} \cdot V_{9}<0$. So we obtain that at least two limit cycles bifurcating from the origin.
	
	Working similarly we consider the expression $V_5$ given in \eqref{V5-apos}, replacing $d_6$ by $d_{62} \varepsilon ^2+ d_{63} \varepsilon ^3, d_5$ by $d_{52} \varepsilon ^2+d_{53} \varepsilon ^3 + d_{54} \varepsilon ^4$ and $d_1$ by $d_{10}+ d_{11} \varepsilon+ d_{12} \varepsilon ^2 + d_{13} \varepsilon ^3 + d_{14} \varepsilon ^4 + d_{15} \varepsilon^5$ with $d_{62}=\frac{12 a^2 d_4^3}{35 b}, d_{52}=-\frac{1}{7} \left(30 d_{4}^3\right), d_{53}=-\frac{16 b d_{63}}{a^2}, d_{10} = -\frac{6 b d_{4}}{a^2},  d_{11} = 0, d_{12} =-\frac{12 a^2 d_{4}^3}{7 b},  d_{13} = -5 d_{63}$ and $d_{14} = 0$, we get
	\[
	V_5=-\dfrac{1}{8} \pi  d_{15} \varepsilon ^6.
	\]
	So we obtain that $|V_5| \ll |V_7| \ll |V_{9}| \ll  |V_{11}|$, and if $d_4>0, d_{63}>0, d_{54}<0$ and $d_{15} > 0$ then $V_{9} \cdot V_{11}<0, V_{7} \cdot V_{9}<0$ and $V_{5} \cdot V_{7}<0$. So we obtain that at least three limit cycles bifurcating from the origin.
	
	Consider now the expression of $V_3$ given in \eqref{V3-apos}, replacing $d_6$ by $d_{62} \varepsilon ^2+ d_{63} \varepsilon ^3, d_5$ by $d_{52} \varepsilon ^2+d_{53} \varepsilon ^3 + d_{54} \varepsilon ^4, d_1$ by $d_{10}+ d_{11} \varepsilon+ d_{12} \varepsilon ^2 + d_{13} \varepsilon ^3 + d_{14} \varepsilon ^4 + d_{15} \varepsilon^5$ and $d_2$ by $d_{20}+ d_{21} \varepsilon+ d_{22} \varepsilon ^2 + d_{23} \varepsilon ^3 + d_{24} \varepsilon ^4 + d_{25} \varepsilon^5 + d_{26} \varepsilon^6$ with $d_{62}=\frac{12 a^2 d_4^3}{35 b}, d_{52}=-\frac{1}{7} \left(30 d_{4}^3\right), d_{53}=-\frac{16 b d_{63}}{a^2}, d_{10} = -\frac{6 b d_{4}}{a^2},  d_{11} = 0, d_{12} =-\frac{12 a^2 d_{4}^3}{7 b},  d_{13} = -5 d_{63}, d_{14} = 0, d_{20} = -3 d_{4}, d_{21}=0, d_{22}=0, d_{23}=0, d_{24}=0$ and $d_{25}=0$, we obtain
	\[
	V_3=-\dfrac{1}{4} \pi  d_{26} \varepsilon ^7.
	\]
	Therefore we get that $|V_3| \ll |V_5| \ll |V_7| \ll |V_{9}| \ll  |V_{11}|$, and if $d_4>0, d_{63}>0, d_{54}<0, d_{15} > 0$ and $d_{26} < 0$ then $V_{9} \cdot V_{11}<0, V_{7} \cdot V_{9}<0, V_{5} \cdot V_{7}<0$ and $V_{3} \cdot V_{5}<0$. So we obtain that at least four limit cycles bifurcating from the origin.
	
	Finally, replacing $d_6$ by $d_{62} \varepsilon ^2+ d_{63} \varepsilon ^3, d_5$ by $d_{52} \varepsilon ^2+d_{53} \varepsilon ^3 + d_{54} \varepsilon ^4, d_1$ by $d_{10}+ d_{11} \varepsilon+ d_{12} \varepsilon ^2 + d_{13} \varepsilon ^3 + d_{14} \varepsilon ^4 + d_{15} \varepsilon^5, d_2$ by $d_{20}+ d_{21} \varepsilon+ d_{22} \varepsilon ^2 + d_{23} \varepsilon ^3 + d_{24} \varepsilon ^4 + d_{25} \varepsilon^5 + d_{26} \varepsilon^6$ and $d_3$ by $d_{37} \varepsilon^7$, with $d_{62}=\frac{12 a^2 d_4^3}{35 b}, d_{52}=-\frac{1}{7} \left(30 d_{4}^3\right), d_{53}=-\frac{16 b d_{63}}{a^2}, d_{10} = -\frac{6 b d_{4}}{a^2},  d_{11} = 0, d_{12} =-\frac{12 a^2 d_{4}^3}{7 b},  d_{13} = -5 d_{63}, d_{14} = 0, d_{20} = -3 d_{4}, d_{21}=0, d_{22}=0, d_{23}=0, d_{24}=0$ and $d_{25}=0$, we get that the origin is a stable focus if $d_{37}<0$ and unstable focus if $d_{37}>0$.
	
	Hence we obtain that $|V_3| \ll |V_5| \ll |V_7| \ll |V_{9}| \ll  |V_{11}|$, and if $d_4>0, d_{63}>0, d_{54}<0, d_{15} > 0, d_{26} < 0$ and $d_{37}>0$ then $V_{9} \cdot V_{11}<0, V_{7} \cdot V_{9}<0, V_{5} \cdot V_{7}<0$ and $V_{3} \cdot V_{5}<0$. In this way we obtain one more limit cycle bifurcating from the origin. Therefore we get at least five limit cycles bifurcating from the origin, concluding the proof.
%	
%	
%	\[
%	V_1=1+d_{37}\pi\varepsilon^8+\mathit{O}(\varepsilon^{16}).
%	\]
%	%V_1=\frac{1}{2} \pi ^2 d_{37}^2 \varepsilon ^{16}+\pi  d_{37} \varepsilon ^8
%	So we obtain that $|V_3| \ll |V_5| \ll |V_7| \ll |V_{9}| \ll  |V_{11}|$, and if $d_4>0, d_{63}>0, d_{54}<0, d_{15} > 0, d_{26} < 0$ and $d_{37}<0$ then $V_{9} \cdot V_{11}<0, V_{7} \cdot V_{9}<0, V_{5} \cdot V_{7}<0$ and $V_{3} \cdot V_{5}<0$. Moreover, assuming $d_{37}<0$ then $V_1<1$ and the origin is an attractor. In this way we obtain one more limit cycle bifurcating from the origin. Therefore we get at least five limit cycles bifurcating from the origin, concluding the proof.
\end{proof}

\begin{remark}\label{multiplicidade}
	One could argue about the existence of more than five limit cycles inside the heteroclinic loop. Although that situation could be realizable, usually the multiplicity of the weak focus determines an upper bound for the number of limit cycles inside the loop. For instance, in the previous lemma it is easy to see that the first limit cycle bifurcation from the origin, call $\gamma$, is unstable. Moreover we are able to compare this information with the persistence or not of the heteroclinic loop. Indeed by fixing the values of parameters for which we obtain five limit cycles, the expression of $M_1$ from Subsection \ref{heteroclinica} we get
	$$
	M_1=\dfrac{a^8d_4^3}{2310\sqrt{2}b^4}\varepsilon^2+\mathit{O}(\varepsilon^3).
	$$
	Since we are assuming $d_4>0$ we get that $M_1>0$. From Subsection \ref{heteroclinica} the orbit leaving $p_1$ goes away from $\gamma$, so by using the Poincar\'e-Bendixson Theorem a convenient annular region it follows that system \eqref{FormaCanonicaR-L-apos} has either no limit cycles between $\gamma$ and the saddle point $p_1$ or it appears in pairs. So we conjecture that five is an upper bound for the number of limit cycles on the region located between the weak focus and the saddle point.
\end{remark}

\begin{lemma}\label{lema-(2,3),apos}
	There exist a suitable choice of parameters such that system \eqref{FormaCanonicaR-L-apos} has simultaneously 3 limit cycles bifurcating from the heteroclinic loop and 2 limit cycles bifurcating from the origin.
\end{lemma}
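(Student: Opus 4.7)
The plan is to combine the Bautin-type cascade used in Lemma \ref{lema-(5,0),apos} with the Melnikov/divergence cascade for the heteroclinic loop developed in Subsection \ref{heteroclinica}. The goal is to find a parameter configuration at which the origin is a weak focus of order at least two \emph{and} the heteroclinic loop $\mathit{A}$ persists with multiplicity at least three, then to release these degeneracies in a cascade ordered in $\varepsilon$, producing $3$ large-amplitude limit cycles close to the broken loop and $2$ small-amplitude limit cycles close to the origin.

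First I would impose the three cumulative heteroclinic conditions $d_2 = \varphi_1$ (loop persists, Lemma \ref{lema_funcao_melnikov}), $d_1 = \varphi_2$ (divergence at $p_2$ vanishes, Lemma \ref{lema_estabilidade_laco_2}), and $d_5 = \varphi_3$ (next-order stability vanishes). These express $d_1, d_2, d_5$ in terms of $d_3, d_4, d_6, \varepsilon$ and force the heteroclinic loop to have multiplicity three. Substituting into the Lyapunov constants of Lemma \ref{lema-coeficientesLyapunov-apos} (keeping $d_3 = 0$ for the moment), I would then check that the remaining free parameters $d_4, d_6$ can be tuned so that $V_3 = V_5 = 0$ while $V_7 \neq 0$. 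This anchors the system at a codimension-five degeneracy: two order-of-focus degeneracies at the origin and three return-map degeneracies at the loop.

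Next I would release these five degeneracies in successive perturbations of strictly increasing order in $\varepsilon$, flipping exactly one obstruction at each step while preserving the previously chosen signs. Concretely: (i) perturb $d_5$ slightly away from $\varphi_3$ so that the integral in Lemma \ref{lema_estabilidade_laco_2} becomes nonzero with a chosen sign, producing by Poincar\'e--Bendixson a first large limit cycle close to the loop; (ii) perturb $d_1$ away from $\varphi_2$ with opposite sign to flip $\mathrm{div}(p_2)$ in Lemma \ref{lema_estabilidade_laco}, producing a second large limit cycle; (iii) perturb $d_2$ away from $\varphi_1$ with opposite sign so that the loop itself breaks, yielding a third large limit cycle. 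In parallel, (iv) perturb $d_4$ or $d_6$ at a strictly higher order in $\varepsilon$ so that $V_5\cdot V_7 < 0$ with $|V_5| \ll |V_7|$, producing one small limit cycle; (v) finally perturb $d_3$ at an even higher order so that the origin becomes linearly unstable with $V_3 \cdot V_5 < 0$ and $|V_3| \ll |V_5|$, producing a second small limit cycle. The separation of scales guarantees that all previous sign conditions survive each new perturbation, and nested annuli around the weak focus and around the broken heteroclinic loop keep the two families well separated.

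The main obstacle is the compatibility of the two cascades. After the substitutions $d_2 = \varphi_1$, $d_1 = \varphi_2$, $d_5 = \varphi_3$, the three parameters $d_1, d_2, d_5$ are pinned, and one must verify that the reduced conditions $V_3(d_3, d_4, d_6, \varepsilon) = 0$ and $V_5(d_3, d_4, d_6, \varepsilon) = 0$ admit solutions for which $V_7 \neq 0$ and none of the denominators in the $\varphi_i$'s vanishes. A secondary delicate point is the bookkeeping of orders in $\varepsilon$: steps (i)--(v) must be placed at strictly increasing orders, so that no previously established sign is reversed; this is essentially the same kind of $\varepsilon$-expansion of each $d_i$ that was carried out in Lemma \ref{lema-(5,0),apos}, but now shorter and constrained by the loop cascade. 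Once these two points are settled, the Poincar\'e--Bendixson theorem applied on a small annulus around the origin and on a large annulus surrounding the broken heteroclinic loop delivers the claimed $(2, 3)$ configuration.
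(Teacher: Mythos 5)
Your proposal follows essentially the same route as the paper: release the three heteroclinic degeneracies $d_5=\varphi_3$, $d_1=\varphi_2$, $d_2=\varphi_1$ at successive orders $\varepsilon$, $\varepsilon^2$, $\varepsilon^3$ (two stability flips of the loop, then its breaking) to produce the three large cycles via Poincar\'e--Bendixson, and then run a truncated version of the Bautin cascade of Lemma \ref{lema-(5,0),apos} at strictly higher orders in $\varepsilon$ for the two small cycles, with hyperbolicity guaranteeing simultaneity. The only differences are cosmetic (you anchor at a focus of order three where order two suffices, and the paper flips $V_5$ through a higher-order correction of $d_1$ rather than through $d_4$ or $d_6$), and the compatibility issue you flag is left implicit in the paper as well.
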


\begin{proof}
	As we see in Subsection \ref{heteroclinica} system \eqref{FormaCanonicaR-L-apos} has a heteroclinic loop $\mathit{A}_1^\varepsilon$ if $d_2=-\frac{12 d_4}{7}-\frac{5a^4d_5}{84b^2}+\frac{a^2(99d_1+160 d_6)}{462b}+\frac{10bd_3}{a^2}+\mathit{O}(\varepsilon)$. Moreover, if $d_1=\frac{40d_6}{33}+\frac{2a^2d_5}{3b}-\frac{6d_4b}{a^2}+\frac{28d_3b^2}{a^4}$ then the stability of that loop is established by the sign of $d_5=6b(-100a^4d_6+231a^2bd_4+308d_3b^2)/11a^6$. The first part of the proof consists in changing the stability of the loop to get 2 limit cycles, and finally to destroy the loop in order to obtain one more limit cycle. We start assuming $d_5<0$ so $\mathit{A}_1^\varepsilon$ is stable. Now we write $d_5=d_{50}+\varepsilon d_{51}$ with $d_{50}=(1848 b^3 d_3 + 1386 a^2 b^2 d_4 - 600 a^4 b d_6)/11a^6$ so
	\begin{equation}\label{M1}
	M_1=\displaystyle\int_{\mathit{A}_1}\frac{\partial Q_1}{\partial y}dt=-\dfrac{a^7d_{51}}{315\sqrt{-2b^7}}\varepsilon^2.
	\end{equation}
	Consequently since $a>0$ the loop $\mathit{A}_1^\varepsilon$ now is defined by the sign of $d_{51}$. Indeed, if $d_{51}<0$ then the loop changes its stability from stable to unstable and by applying Poincar\'e-Bendixson Theorem a stable limit cycle emerges.
	
	In order to obtain a new limit cycle we change again the stability of $\mathit{A}_1^\varepsilon$. For doing this, consider the expression of $div(p_2)=\frac{\partial Q_1}{\partial y}$ replacing $d_1$ by $d_{10}+\varepsilon d_{11}+\varepsilon^2 d_{12}$ with $d_{11}=2a^2 d_{51}/3b$ and $d_{10}=(66b(70b d_3+39a^2 d_4)-1160a^4 d_6)/33a^4$, we get
\begin{equation}\label{div}
div(p_2)=\dfrac{a^2d_{12}}{7b^2}\varepsilon^3.
\end{equation}
Therefore by choosing $d_{12}<0$ the heteroclinic loop changes stability from unstable to stable and again, by the Poincar\'e-Bendixson theorem, a limit cycle bifurcates from $\mathit{A}_1^\varepsilon$. More precisely, that limit cycles is unstable.
	Finally to obtain the third limit cycle we choose the parameters so that the loop $\mathit{A}_1^\varepsilon$ is broken. We do that by taking $d_2=d_{20}+\varepsilon d_{21}+\varepsilon^2 d_{22}+\varepsilon^3 d_{23}$ where now we choose $d_{22}=3a^2 d_{12}/14b$, $d_{21}=a^4 d_{51}/12b^2$ and $d_{20}=(1980a^2b^2 d_3+495 a^2 b d_4-260 a^4 b d_6)/66a^2 b$. Then we obtain
	$$
	M_1=-\dfrac{a^4d_{23}}{15\sqrt{2}b^2}\varepsilon^3.
	$$
	We see that assuming $d_{23}\neq0$ the loop is broken. Moreover, from Section \ref{Secao-FuncoesMelnikov} and since the second limit cycle is unstable, by choosing $d_{23}>0$ we get $M_1<0$. Consequently, by the Poincar\'e-Bendixson Theorem we obtain a third limit cycle which is stable. Therefore we obtain three limit cycles by using Melnikov method. Moreover, for $\varepsilon$ positive and sufficiently small we have that the expression of equations \eqref{M1} and \eqref{div} have opposite sign and $|div(p_2)|\ll|M_1|$. It means that the change of stability of $\mathit{A}_1^\varepsilon$ are only local so we can apply the Poincar\'e-Bendixson Theorem to obtain the limit cycles.
	
	The proof of the lemma follows by applying the same approach used in Lemma \ref{lema-(5,0),apos} having now fixed the values $d_5=d_{50}+\varepsilon d_{51}$, $d_1=d_{10}+\varepsilon d_{11}+\varepsilon^2 d_{12}$ and $d_2=d_{20}+\varepsilon d_{21}+\varepsilon^2 d_{22}+\varepsilon^3 d_{23}$. That is, it can be obtained two more limit cycles but now bifurcating from the origin of system \eqref{FormaCanonicaR-L-apos} so we get 5 limit cycles for such a system. The simultaneity occurs because at each step the obtained limit cycles are hyperbolic so they remain by assuming perturbations of the parameters involving higher orders of $\varepsilon$.
\end{proof}

\begin{lemma}\label{lema-(s,m),apos}
	There exist a suitable choice of parameters such that system \eqref{FormaCanonicaR-L-apos} has $s$ limit cycles bifurcating from the origin and $m$ limit cycles bifurcating from the heteroclinic loop with $s\in\{0,1,3,4,5\}$, $m\in\{0,1,2\}$ and $s+m\leq5$.
\end{lemma}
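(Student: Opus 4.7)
The plan is to unify the techniques of Lemmas \ref{lema-(5,0),apos} and \ref{lema-(2,3),apos} by organizing the perturbations of the parameters $d_j$ as polynomials in $\varepsilon$ and prescribing their coefficients stage by stage, so that the Melnikov indicators and the Lyapunov constants change sign in a controlled hierarchy. The global strategy is to realize the $m$ large-amplitude cycles through low-order coefficients in $\varepsilon$ and afterwards realize the $s$ small-amplitude cycles via strictly higher-order corrections. Since every limit cycle produced in these constructions is hyperbolic (it comes from a transversal zero of either $M_1$, $\mathrm{div}(p_2)$ or a Lyapunov constant $V_{2k+1}$), it will persist under all subsequent perturbations, which guarantees the simultaneity of both groups.

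First I would handle the large-amplitude part. If $m=0$ nothing is required beyond possibly breaking the loop trivially. For $m=1$ I would either keep $d_2=\varphi_1$ so the loop $\mathit{A}_\varepsilon$ persists and flip the sign of the divergence by one $\varepsilon$-correction of $d_1$, or simply break the loop with a single $\varepsilon$-correction of $d_2$ forcing $M_1\neq 0$. For $m=2$ I would reproduce the first two stages of the proof of Lemma \ref{lema-(2,3),apos}: enforce $d_2=\varphi_1$, $d_1=\varphi_2$ and then write $d_5=d_{50}+\varepsilon d_{51}$ to flip the sign of $M_1$, and $d_1=d_{10}+\varepsilon d_{11}+\varepsilon^2 d_{12}$ to flip $\mathrm{div}(p_2)$. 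Each sign change produces, by Poincar\'e--Bendixson applied in a thin annulus around $\mathit{A}_\varepsilon$, a hyperbolic limit cycle, thus giving exactly the required number $m$ of large cycles.

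Next I would produce the $s$ small-amplitude cycles by running the Bautin chain of Lemma \ref{lema-(5,0),apos} on top of the already fixed Melnikov data. Starting from the base values (possibly modified by the low-order Melnikov corrections above), I would successively introduce the higher-order perturbations $d_{63}\varepsilon^{3}, d_{54}\varepsilon^{4}, d_{15}\varepsilon^{5}, d_{26}\varepsilon^{6}, d_{37}\varepsilon^{7}$ in the order prescribed by Lemma \ref{lema-(5,0),apos}, and truncate the chain after $s$ of them (for $s\in\{0,1,3,4,5\}$), setting all remaining coefficients to zero. Because these corrections affect the Lyapunov constants only at orders strictly larger than those used to organize the Melnikov bifurcations, the large cycles already constructed will persist, and the system will exhibit $s$ small cycles concentrated near the origin and $m$ large cycles concentrated near $\mathit{A}_\varepsilon$.

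The main technical obstacle will be the compatibility of the two chains of perturbations, since the same parameters $d_1, d_2, d_5$ appear in both the Melnikov expansion and the Bautin normalization. One must verify that the base values required to impose $V_3=V_5=\cdots=V_{2k-1}=0$ are not altered, up to the orders that matter, by the Melnikov corrections. This will follow from the fact that each Melnikov correction introduces a new coefficient of $\varepsilon$ at an order strictly greater than the one already pinned down, together with the polynomial dependence of the Lyapunov constants $V_{2j+1}$ on $\varepsilon$ and the $d_i$. Once this strict separation of orders is respected, the combinatorics of signs can be matched for every admissible pair $(s,m)$ with $s\in\{0,1,3,4,5\}$, $m\in\{0,1,2\}$ and $s+m\leq 5$, completing the proof.
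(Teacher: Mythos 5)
Your proposal is correct and follows essentially the same route as the paper, which simply declares the lemma straightforward by combining the constructions of Lemmas \ref{lema-(5,0),apos} and \ref{lema-(2,3),apos}; your explicit organization (low-order $\varepsilon$-corrections for the $m$ large cycles via $M_1$ and $\mathrm{div}(p_2)$, then higher-order corrections for the truncated Bautin chain giving $s$ small cycles, with hyperbolicity ensuring simultaneity) is exactly the intended argument, spelled out in more detail than the paper provides.
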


\begin{proof}
	The proof of this Lemma \ref{lema-(s,m),apos} is straightforward by using the same construction of Lemmas \ref{lema-(5,0),apos} and \ref{lema-(2,3),apos}.
\end{proof}

We now state similar results but concerning system \eqref{FormaCanonicaR-L-aneg}. Again we obtain limit cycles bifurcating from the center and from the homoclinic loops considering convenient values of the parameters of system \eqref{FormaCanonicaR-L-aneg}. In addition we also consider some symmetry in system \eqref{FormaCanonicaR-L-aneg} so that limit cycles may emerge in pairs.

\begin{lemma}\label{lema-(5,0),aneg}
	There exists at least five limit cycles bifurcating from the origin of system \eqref{FormaCanonicaR-L-aneg}.
\end{lemma}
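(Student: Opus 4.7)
The plan is to apply Bautin's scheme exactly as in Lemma \ref{lema-(5,0),apos}, but now using the heavier Lyapunov constants of system \eqref{FormaCanonicaR-L-aneg} computed in Lemma \ref{lema-coeficientesLyapunov-aneg}. First I would fix $e_3=0$ and impose the chain of cancellations $V_3=V_5=V_7=V_9=0$ via the formulas for $e_2,e_1,e_9,e_5$ derived in the proof of Lemma \ref{lema-coeficientesLyapunov-aneg}. Under these conditions the sign of $V_{11}$ is controlled by $\mathit{S}(V_{11})$ in \eqref{sinal-V11-CasoAneg}, so one selects $e_4,e_6,e_7,e_8$ making the bracket $785a^3e_6+26a^2\sqrt{b}e_8-117abe_4-13b^{3/2}e_7$ nonzero, which turns the origin into a weak focus of order five. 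Without loss of generality I would take this quantity positive, so that the origin is unstable.

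Next, to bifurcate the first limit cycle I would replace the value $e_5=\phi_3$ annihilating $V_9$ by an $\varepsilon$-expansion of the form $e_5=\phi_3+\varepsilon^{k_1}e_{5,k_1}$, so that the perturbed $V_9$ acquires a leading term proportional to the new free coefficient $e_{5,k_1}$, whose sign can be chosen opposite to $V_{11}$. Since $|V_9|\ll|V_{11}|$ for $\varepsilon$ small, applying Poincar\'e--Bendixson to a thin annular region between two convenient Lyapunov level curves produces one limit cycle. I would then iterate this procedure, successively introducing $\varepsilon$-expansions for $e_9$, $e_1$, $e_2$ (each time reopening $V_7$, $V_5$, $V_3$ respectively with the opposite sign) and finally perturbing $e_3=\varepsilon^{N}e_{3,N}$, which introduces a nontrivial linear damping $V_1$ with sign opposite to $V_3$. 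Each sign reversal in the hierarchy $|V_1|\ll|V_3|\ll|V_5|\ll|V_7|\ll|V_9|\ll|V_{11}|$ yields one additional limit cycle, and previously constructed limit cycles persist because they are hyperbolic.

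The main obstacle is purely computational: at each step one must verify that the newly introduced expansion coefficient truly acts as an independent free parameter on the leading term of the next Lyapunov constant, so that its sign can be prescribed without disturbing the vanishing conditions already imposed. This amounts to checking that certain rational combinations of the $e_i$'s and of $a,b$ do not vanish identically, which follows from inspecting the explicit expressions \eqref{V3-aneg}--\eqref{V11-aneg}. In particular, the denominator $848a^3e_4e_7\varepsilon^2+7875b^{3/2}$ appearing in the constant $K_1$ of \eqref{V11-aneg} is manifestly nonzero for $\varepsilon$ sufficiently small, so the sign analysis of $V_{11}$ is unaffected by the subsequent perturbations. Once this bookkeeping is carried out in the same order as in the proof of Lemma \ref{lema-(5,0),apos}, Bautin's argument produces five small-amplitude limit cycles bifurcating from the origin of \eqref{FormaCanonicaR-L-aneg}, completing the proof.
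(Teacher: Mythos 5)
Your proposal follows essentially the same route as the paper: fix $e_3=0$, impose the cancellation chain via $e_2,e_1,e_9,e_5$, read the sign of $V_{11}$ from $\mathit{S}(V_{11})$ in \eqref{sinal-V11-CasoAneg}, and then reopen $V_9,V_7,V_5,V_3$ and finally the stability of the origin by $\varepsilon$-expansions of $e_5,e_9,e_1,e_2,e_3$ in exactly that order, invoking Poincar\'e--Bendixson at each sign reversal. The only slip is notational: the value of $e_5$ annihilating $V_9$ is the rational expression from the proof of Lemma \ref{lema-coeficientesLyapunov-aneg}, not the function $\phi_3$ of Lemma \ref{lema_estabilidade_laco_31}, which instead governs the stability of the homoclinic loop.
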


\begin{proof}
In the proof of Lemma \ref{lema-coeficientesLyapunov-aneg} under the hypothesis $e_3=0$ we obtain that: if the parameters $e_i$ satisfies the conditions
\[
\begin{array}{ll}
e_2&=-\dfrac{3 \left(2 ae_4+\sqrt{b}e_7\right)}{2 a},\\\\
e_1  &= \dfrac{ 1 }{12 a^3} \Big( 8 a^3e_4e_7^2 \varepsilon ^2-60 a^3e_6-30 a^2 \sqrt{b}e_8+117 a be_4+15 b^{3/2}e_7\Big),\\\\
e_9   &=\frac{  1} {105 a^4 \sqrt{b}} \Big( 36 a^5e_4^3 \varepsilon ^2+48 a^5e_4e_7e_8 \varepsilon ^2-30 a^5e_5-20 a^5e_6e_7^2 \varepsilon ^2 -252 a^4 \sqrt{b}e_4^2e_7 \varepsilon ^2-158 a^3 be_4e_7^2 \varepsilon ^2\\\\
&+1005 a^3 be_6+210 a^2 b^{3/2}e_8-945 a b^2e_4-105 b^{5/2}e_7\Big),
\end{array}
\]
and
\[
\begin{array}{ll}
e_5  &=\dfrac{1}{120 a^5 \left(848 a^3e_4e_7 \varepsilon ^2+7875 b^{3/2}\right)}
\Big( 122112 a^8e_4^4e_7 \varepsilon ^4+162816 a^8e_4^2e_7^2e_8 \varepsilon ^4-67840 a^8e_4e_6e_7^3 \varepsilon ^4\\\\ &-592704 a^7 \sqrt{b}e_4^3e_7^2 \varepsilon ^4+907200 a^7 \sqrt{b}e_4^2e_6 \varepsilon ^2+110880 a^7 \sqrt{b}e_4e_8^2 \varepsilon ^2+40320 a^7 \sqrt{b}e_6e_7^4 \varepsilon ^4\\\\ &+161280 a^7 \sqrt{b}e_6e_7e_8 \varepsilon ^2+156224 a^6 be_4^2e_7^3 \varepsilon ^4-1239840 a^6 be_4^2e_8 \varepsilon ^2-2356800 a^6 be_4e_6e_7 \varepsilon ^2\\\\ &+2426760 a^5 b^{3/2}e_4^3 \varepsilon ^2-1782480 a^5 b^{3/2}e_4e_7e_8 \varepsilon ^2+587160 a^5 b^{3/2}e_6e_7^2 \varepsilon ^2+9124920 a^4 b^2e_4^2e_7 \varepsilon ^2\\\\ &+1912470 a^3 b^{5/2}e_4e_7^2 \varepsilon ^2-7867125 a^3 b^{5/2}e_6 -727650 a^2 b^3e_8+3274425 a b^{7/2}e_4+363825 b^4e_7\Big)
\end{array}
\]
then the system \eqref{FormaCanonicaR-L-aneg} presenting a weak focus at the origin. Similarly we did in the proof of Lemma \ref{lema-(5,0),apos}, we will consider small perturbations in these values of parameters to obtain a variation of the sign of the Lyapunov constants and the stability of the origin.

The signal of $V_{11}$ is given by the sign of $\mathit{S}(V_{11})$ presented in  \eqref{sinal-V11-CasoAneg}. Therefore, if
\[
\begin{array}{lrl}
e_7& >&\dfrac{a \left(785 a^2 e_6 + 26 a \sqrt{b} e_8-117 b e_4\right)}{13 b^{3/2}},\\\\ e_7 &< &\dfrac{a \left(785 a^2 e_6 + 26 a \sqrt{b} e_8-117 b e_4\right)}{13 b^{3/2}},
\end{array}
\]
resp., then the origin is a weak repeller focus, weak attractor focus, resp.. Without loss of generality we assume that $e_7>\dfrac{a \left(785 a^2 e_6 + 26 a \sqrt{b} e_8-117 b e_4\right)}{13 b^{3/2}}$. Consider the expression of $V_{9}$ given in \eqref{V9-aneg} replacing $e_5$ by $e_{50}+ e_{51} \varepsilon$, with
\[
e_{50}=    \frac{b \left(-1665 a^3 e_6-154 a^2 \sqrt{b} e_8+693 a b e_4+77 b^{3/2} e_7\right)}{200 a^5},
\]
we obtain
\[
\begin{array}{ll}
V_9  &=\dfrac{  1} {50400 a \sqrt{b}} \Big( \pi  e_7 \varepsilon ^5 \Big(212 a^2 e_4 \Big(9 e_4^3+12 e_4 e_7 e_8-5 e_6 e_7^2\Big)+63 a \sqrt{b} e_7 \Big(10 e_6 e_7^2-147 e_4^3\Big)\\\\
&+2441 b e_4^2 e_7^2\Big)\Big) - \dfrac{75 \pi  b e_{51} \varepsilon ^2}{256 a^2}+\dfrac{ 1} {384000 a^4} \Big(  \pi  \varepsilon ^3 \Big(1200 a^4 \Big(90 e_4^2 e_6+11 e_4 e_8^2+16 e_6 e_7 e_8\Big)\\\\
&-40 a^3 \sqrt{b} e_4 (3690 e_4 e_8+4493 e_6 e_7) + 12 a^2 b \Big(24075 e_4^3-16906 e_4 e_7 e_8+5825 e_6 e_7^2\Big)\\\\
&+1044324 a b^{3/2} e_4^2 e_7+223011 b^2 e_4 e_7^2\Big) \Big)-\dfrac{53 \pi  a e_4 e_{51} e_7 \varepsilon ^4}{1680 \sqrt{b}}.
\end{array}
\]
So the sign of $V_9$ is given by the sign of $e_{51}$. Then $|V_{9}| \ll  |V_{11}|$ and if $e_{51}>0$ and
\[
e_7>\dfrac{a \left(785 a^2 e_6 + 26 a \sqrt{b} e_8-117 b e_4\right)}{13 b^{3/2}}
\]
then $V_{9} \cdot V_{11}<0$. Therefore we conclude that at least one limit cycle bifurcating from the origin with these assumptions.
	
Consider the expression of $V_7$ given in \eqref{V7-aneg}, replacing $e_9$ by $e_{90}+e_{91} \varepsilon + e_{92} \varepsilon^2$ and $e_5$ by $e_{50}+ e_{51} \varepsilon$, with
\[
\begin{array}{ll}
e_{50}   &=  \dfrac{b \left(-1665 a^3 e_6-154 a^2 \sqrt{b} e_8+693 a b e_4+77 b^{3/2} e_7\right)}{200 a^5},\\\\
e_{90}   &=\dfrac{1195 a^3 \sqrt{b} e_6+222 a^2 b e_8-999 a b^{3/2} e_4-111 b^2 e_7}{100 a^4}
\end{array}
\]
and $ e_{91}=-\frac{2 a e_{51}}{7 \sqrt{b}}$, we obtain
\[
\begin{array}{ll}
V_7   &=\dfrac{1 }{384 a^2} \Big( \pi  \varepsilon ^3 \Big(4 a^2 \Big(9 e_4^3+12 e_4 e_7 e_8-5 e_6 e_7^2\Big)- 21 a \sqrt{b} \Big(12 e_4^2 e_7+5 e_{92}\Big)-158 b e_4 e_7^2\Big)\Big) .
\end{array}
\]
Then $|V_7| \ll |V_{9}| \ll  |V_{11}|$, and if $e_{51} > 0$,
\[
\begin{array}{ll}
e_7         & > \dfrac{a \left(785 a^2 e_6 + 26 a \sqrt{b} e_8-117 b e_4\right)}{13 b^{3/2}},\\\\
e_{92} &> \dfrac{2}{105} \Big(\frac{2 a \left(9 e_4^3+12 e_4 e_7 e_8-5 e_6 e_7^2\right)}{\sqrt{b}}-\frac{79 \sqrt{b} e_4 e_7^2}{a}-126 e_4^2 e_7\Big)
\end{array}
\]
then $V_{9} \cdot V_{11}<0$ and $V_{7} \cdot V_{9}<0$. So we obtain that at least two limit cycles bifurcating from the origin.
	
Consider the expression of $V_5$ given in \eqref{V5-aneg}, replacing $e_9$ by $e_{90}+e_{91} \varepsilon + e_{92} \varepsilon^2$ and $e_5$ by $e_{50}+ e_{51} \varepsilon$ and $e_1$ by $e_{10}+ e_{12} \varepsilon^2+ e_{13} \varepsilon^3$, with
$e_{50}=    \frac{b}{200 a^5} \Big(-1665 a^3 e_6-154 a^2 \sqrt{b} e_8+693 a b e_4+77 b^{3/2} e_7\Big), e_{90}=\frac{1}{100 a^4} \Big( 1195 a^3 \sqrt{b} e_6$ $+222 a^2 b e_8-999 a b^{3/2} e_4-111 b^2 e_7\Big), e_{91}=-\frac{2 a e_{51}}{7 \sqrt{b}}, e_{12}=\frac{2 e_4 e_7^2}{3}, e_{10}=$ \linebreak$\frac{-20 a^3 e_6-10 a^2 \sqrt{b} e_8+39 a b e_4+5 b^{3/2} e_7}{4 a^3}$, therefore
\[
V_5=-\dfrac{1}{8} \pi  e_{13} \varepsilon ^4.
\]
So we obtain that $|V_5| \ll |V_7| \ll |V_{9}| \ll  |V_{11}|$ and if $e_7 > \dfrac{a \left(785 a^2 e_6 + 26 a \sqrt{b} e_8-117 b e_4\right)}{13 b^{3/2}}, e_{51} > 0, e_{92} > \frac{2}{105} \left(\frac{2 a \left(9 e_4^3+12 e_4 e_7 e_8-5 e_6 e_7^2\right)}{\sqrt{b}}-\frac{79 \sqrt{b} e_4 e_7^2}{a}-126 e_4^2 e_7\right)$ and $e_{13} > 0$ then $V_{9} \cdot V_{11}<0, V_{7} \cdot V_{9}<0$ and $V_{5} \cdot V_{7}<0$, then we obtain that at least three limit cycles bifurcating from the origin.

Now consider the expression of $V_3$ given in \eqref{V3-aneg}, replacing $e_9$ by $e_{90}+e_{91} \varepsilon + e_{92} \varepsilon^2$ and $e_5$ by $e_{50}+ e_{51} \varepsilon$ and $e_{10}+ e_{12} \varepsilon^2+ e_{13} \varepsilon^3$ and $e_2$ by $e_{20}+ e_{24} \varepsilon^4$, with $e_{50}=    \frac{b }{200 a^5}(-1665 a^3 e_6-154 a^2 \sqrt{b} e_8+693 a b e_4+$ $77 b^{3/2} e_7),$ $e_{90}=$ $\frac{1}{100 a^4}\Big( 1195 a^3 \sqrt{b} e_6+222 a^2 b e_8-999 a b^{3/2} e_4 -$ $111 b^2 e_7\Big),$ $ e_{91}=$$-\frac{2 a \text{e51}}{7 \sqrt{b}}, e_{10}=$\linebreak$\frac{ 1}{4 a^3}\Big(  -20 a^3 e_6-10 a^2 \sqrt{b} e_8+39 a b e_4+5 b^{3/2} e_7\Big), e_{12}=$ $\frac{2 e_4 e_7^2}{3}, e_{20}=-\frac{3 \sqrt{b} e_7}{2 a}-3 e_4$,
we obtain
\[
V_3=-\dfrac{1}{4} \pi e_{24} \varepsilon ^5.
\]
Therefore we get that $|V_3| \ll |V_5| \ll |V_7| \ll |V_{9}| \ll  |V_{11}|$, and if $e_7 > \frac{a \left(785 a^2 e_6 + 26 a \sqrt{b} e_8-117 b e_4\right)}{13 b^{3/2}}, $\linebreak $e_{51} > 0, e_{92} > \frac{2}{105} \Big(\frac{2 a \left(9 e_4^3+12 e_4 e_7 e_8-5 e_6 e_7^2\right)}{\sqrt{b}}-\frac{79 \sqrt{b} e_4 e_7^2}{a}$ $-126 e_4^2 e_7\Big), e_{13} > 0$ and $e_{24} < 0$ then $V_{9} \cdot V_{11}<0, V_{7} \cdot V_{9}<0, V_{5} \cdot V_{7}<0$ and $V_{3} \cdot V_{5}<0$. So we obtain that at least four limit cycles bifurcating from the origin.
	
Finally, replacing $e_9$ by $e_{90}+e_{91} \varepsilon + e_{92} \varepsilon^2$ and $e_5$ by $e_{50}+ e_{51} \varepsilon$ and $e_{10}+ e_{12} \varepsilon^2+ e_{13} \varepsilon^3$ and $e_2$ by $e_{20}+ e_{24} \varepsilon^4$ and $e_3$ by $e_{35} \varepsilon^5$, with $e_{50}=    \frac{b }{200 a^5} \Big(-1665 a^3 e_6-154 a^2 \sqrt{b} e_8+693 a b e_4+77 b^{3/2} e_7\Big),e_{90}=\frac{1195 a^3 \sqrt{b} e_6+222 a^2 b e_8-999 a b^{3/2} e_4-111 b^2 e_7}{100 a^4},$ $e_{91}=-\frac{2 a \text{e51}}{7 \sqrt{b}}, e_{10}=\frac{-20 a^3 e_6-10 a^2 \sqrt{b} e_8+39 a b e_4+5 b^{3/2} e_7}{4 a^3}, $ $e_{12}=\frac{2 e_4 e_7^2}{3}$, $e_{20}=-\frac{3 \sqrt{b} e_7}{2 a}-3 e_4$, we get that the origin is a stable focus if $e_{35}<0$ and unstable focus if $e_{35}>0$.

Hence we obtain that $|V_3| \ll |V_5| \ll |V_7| \ll |V_{9}| $ $\ll  |V_{11}|$, and if
$e_{51} > 0, e_{13} > 0, e_{24} < 0, e_7 > \dfrac{a \left(785 a^2 e_6 + 26 a \sqrt{b} e_8-117 b e_4\right)}{13 b^{3/2}}, e_{35} < 0$ and $e_{92} > \frac{2}{105} \Big(\frac{2 a (9 e_4^3+12 e_4 e_7 e_8-5 e_6 e_7^2)}{\sqrt{b}}-\frac{79 \sqrt{b} e_4 e_7^2}{a}-126 e_4^2 e_7\Big)$ then $V_{9} \cdot V_{11}<0, V_{7} \cdot V_{9}<0, V_{5} \cdot V_{7}<0$ and $V_{3} \cdot V_{5}<0$. In this way we obtain one more limit cycle bifurcating from the origin. Therefore we get at least five limit cycles bifurcating from the origin, concluding the proof.
\end{proof}

%
%Hence we obtain that $|V_3| \ll |V_5| \ll |V_7| \ll |V_{9}| \ll  |V_{11}|$, and if $d_4>0, d_{63}>0, d_{54}<0, d_{15} > 0, d_{26} < 0$ and $d_{37}>0$ then $V_{9} \cdot V_{11}<0, V_{7} \cdot V_{9}<0, V_{5} \cdot V_{7}<0$ and $V_{3} \cdot V_{5}<0$. In this way we obtain one more limit cycle bifurcating from the origin. Therefore we get at least five limit cycles bifurcating from the origin, concluding the proof.

% therefore
%\[
%V_1=1+e_{35}\pi\varepsilon^6+\mathit{O}(\varepsilon^7).
%\]
%So we obtain that $|V_3| \ll |V_5| \ll |V_7| \ll |V_{9}| $ $\ll  |V_{11}|$, and if
%$e_{51} > 0, e_{13} > 0, e_{24} < 0, e_7 > \dfrac{a \left(785 a^2 e_6 + 26 a \sqrt{b} e_8-117 b e_4\right)}{13 b^{3/2}}, e_{35} < 0$ and $e_{92} > \frac{2}{105} \Big(\frac{2 a (9 e_4^3+12 e_4 e_7 e_8-5 e_6 e_7^2)}{\sqrt{b}}-\frac{79 \sqrt{b} e_4 e_7^2}{a}-126 e_4^2 e_7\Big)$ then $V_{9} \cdot V_{11}<0, V_{7} \cdot V_{9}<0, V_{5} \cdot V_{7}<0$ and $V_{3} \cdot V_{5}<0$. Moreover, assuming $e_{35}<0$ then $V_1<1$ and the origin is an attractor. In this way we obtain one more limit cycle bifurcating from the origin. Therefore we get at least five limit cycles bifurcating from the origin, concluding the proof.

\begin{lemma}\label{lema-(2,3),aneg}
	There exist a suitable choice of parameters such that system \eqref{FormaCanonicaR-L-aneg} has 5 limit cycles bifurcating from the homoclinic loop and 2 limit cycles bifurcating from the origin.
\end{lemma}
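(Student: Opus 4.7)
The strategy would follow closely that of Lemma \ref{lema-(2,3),apos}, now adapted to the homoclinic setting via Lemmas \ref{lema_funcao_melnikov_2}, \ref{lema_estabilidade_laco_21} and \ref{lema_estabilidade_laco_31}, and exploiting the symmetry $(x,y)\mapsto(-x,-y)$ of the original system \eqref{sistema-R-Lgeneralizado}, which, after the translation to $q_1$, interchanges the two halves $\mathit{L}_r^\varepsilon$ and $\mathit{L}_l^\varepsilon$ of the figure-eight. The plan is first to produce five hyperbolic limit cycles near this figure-eight through three successive, $\varepsilon$-ordered parameter perturbations, and then, using the remaining parameter freedom, to apply the first two steps of the Bautin-type construction of Lemma \ref{lema-(5,0),aneg} to obtain two additional small-amplitude limit cycles bifurcating from the weak focus at the origin.

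Concretely, I would start from the maximally degenerate configuration $e_2=\phi_1$, $e_1=\phi_2$, $e_5=\phi_3$ given by Lemmas \ref{lema_funcao_melnikov_2}, \ref{lema_estabilidade_laco_21} and \ref{lema_estabilidade_laco_31}, so that $\mathit{L}_\varepsilon$ persists and its three successive stability indicators vanish. I would then carry out three perturbations in strictly increasing orders of $\varepsilon$. First, write $e_5=e_{50}+\varepsilon e_{51}$ with $e_{50}=\phi_3$; a computation parallel to that of Lemma \ref{lema_estabilidade_laco_31} shows that the leading contribution to $\int_{\mathit{L}_r^\varepsilon}\partial_y Q_2\,dt$ becomes proportional to $e_{51}$, so that an appropriate sign of $e_{51}$ flips the stability of the figure-eight and Poincar\'e--Bendixson, applied in a thin annulus around $\mathit{L}_r^\varepsilon$, produces a hyperbolic limit cycle there; by symmetry, a mirror cycle is born near $\mathit{L}_l^\varepsilon$. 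Second, write $e_1=e_{10}+\varepsilon e_{11}+\varepsilon^2 e_{12}$ with $e_{10}=\phi_2$ and $e_{11}$ chosen to preserve the previous condition; then $div(q_1)$ becomes proportional to $e_{12}$ at order $\varepsilon^3$, strictly smaller than the effect of the previous step, so that flipping its sign changes the stability of the figure-eight once more without disturbing the limit cycles already born, and by symmetry another pair of hyperbolic limit cycles is produced. Third, write $e_2=e_{20}+\varepsilon e_{21}+\varepsilon^2 e_{22}+\varepsilon^3 e_{23}$, tuning the intermediate orders so that the leading contribution to $M_2$ is proportional to $e_{23}$; a suitable sign of $e_{23}$ breaks the figure-eight, and Poincar\'e--Bendixson applied to the region exterior to the previously constructed annuli yields one further large-amplitude limit cycle encircling the figure-eight as a whole. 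This accounts for $2+2+1=5$ large-amplitude limit cycles.

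With $e_1,e_2,e_5$ now fixed to these finite orders in $\varepsilon$, the remaining freedom in $e_3,e_6,e_8,e_9$ and in higher-order corrections of the already-perturbed coefficients can be used to execute the first two steps of Lemma \ref{lema-(5,0),aneg}: adjust two consecutive Lyapunov constants $V_{2j-1}$ and $V_{2j+1}$ so that they have opposite signs with $|V_{2j-1}|\ll|V_{2j+1}|$, and iterate once. The Poincar\'e--Bendixson argument used in that lemma then produces two small-amplitude limit cycles bifurcating from the weak focus at the origin, giving the desired configuration of five cycles near the homoclinic loop and two near the origin.

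The principal obstacle is bookkeeping. One must verify (i) that the six successive perturbations can be realized simultaneously and ordered in $\varepsilon$-magnitude so that each subsequent perturbation is a small deformation of the previously constructed picture; (ii) that each of the seven limit cycles obtained is hyperbolic, so that it persists through the later, strictly higher-order perturbations; and (iii) that in the third step the symmetry indeed produces a single large limit cycle encircling the whole figure-eight, rather than a symmetric pair. Points (i) and (ii) reduce to the ordering argument used in the proof of Lemma \ref{lema-(2,3),apos}; the third point, which is the genuinely new geometric ingredient of the figure-eight case, requires tracking the direction in which the broken branches of the saddle connection at $q_1$ separate as a function of the sign of $e_{23}$.
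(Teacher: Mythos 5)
Your proposal follows essentially the same route as the paper: the same three successive perturbations of $e_5$, $e_1$ and $e_2$ at orders $1$, $2$ and $3$ in $\varepsilon$ (flip the loop integral, then $div(q_1)$, then break the connection via $M_2$), each exploited through Poincar\'e--Bendixson in suitable annuli, followed by the first steps of the Bautin construction of Lemma \ref{lema-(5,0),aneg} for the two small-amplitude cycles. The only difference is bookkeeping of the five large cycles: you count two symmetric internal pairs plus one external cycle from the break, whereas the paper counts one internal and one external cycle per stability change plus one (internal or external according to the sign of $M_2$) from the break, deferring the symmetry argument to the proof of Theorem \ref{teorema-apos}; both accountings are consistent with the construction.
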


\begin{proof}
	The proof of the lemma is similar to the proof of Lemma \ref{lema-(2,3),apos}, so we only highlight some minor differences between them. Indeed, we perform replacements of the parameters $e_5$, $e_1$ and $e_2$ in terms of order 1, 2 and 3 in $\varepsilon$, analogously to what we have done for the parameters $d_5$, $d_1$ and $d_2$ in Lemma \ref{lema-(2,3),apos}. Through the replacement of $e_5$ we change the stability of the homoclinic loop $\mathit{L}_1^\varepsilon$. Applying the Poincar\'e-Bendixson Theorem in the convenient annular regions, one of them internal to $\mathit{L}_1^\varepsilon$ and other one external to it, we obtain a limit cycle in each annulus. Proceeding in a complete analogous way we obtain two more limit cycles from the replacement of the parameter $e_1$. Therefore, we get four limit cycles. In order to obtain the fifth limit cycle, we destroy the loop of $\mathit{L}_1^\varepsilon$ doing the referred replacement in the parameter $e_2$. In this case whatever is the sign of $M_2$ (see Section \ref{heteroclinica}) we obtain a limit cycle, being {\it internal} if $M_2<0$ and {\it external} to it otherwise. In any case we obtain five limit cycles bifurcating from the homoclinic loop. To obtain the two limit cycle bifurcating from the weak focus we proceed as in Lemma \ref{lema-(5,0),aneg}.
\end{proof}

\subsection{Proof of Theorem \ref{teorema-apos}}
As commented before the proof consists in apply the lemmas from Subsection \ref{SessaoLemas}. We remark that in the proof of the lemmas we obtain limit cycles of opposite stability for the cases $a>0$, $b<0$ and $a>0$, $b<0$, respectively, first assuming $d_3=0$ and $e_3=0$. Then one more limit cycle bifurcates by suitable perturbations of $d_3$ and $e_3$, respectively, preserving the already obtained limit cycles. 

More precisely in case $a>0$ and $b<0$ the configurations $(s,0)$ with $s\in \{0,1, \dots, 5\}$ follows from Lemma \ref{lema-(5,0),apos}. The case $s=2$ and $m=3$ follows from Lemma \ref{lema-(2,3),apos}. The remaining configurations $(s, m), m \in \{1, 2, 3\}$ and $s+m\leq 5$ follows from Lemma \ref{lema-(s,m),apos}. The proof of the case $a>0$ and $b<0$ is then completed since the configurations of limit cycles of systems \eqref{sistema-R-Lgeneralizado} and \eqref{FormaCanonicaR-L-apos} are the same.

In the case $a<0$ and $b>0$, we proceed in a similar way to the previous case but now the double homoclinic loop plays a role. In what concerns the configurations of limit cycle bifurcating from the weak focus, after a translation of $p_1$ to the origin, configurations $(2s, k)$ with $s\in \{0, 1, \dots, 5\}, k\in \{0,1, 2\}$ follows from both Lemma \ref{lema-(5,0),aneg} and by symmetry with respect to the $y-$axis, see Remark \ref{RemarkTeorema}. Now we prove the other configurations of limit cycles. First, the configuration $(2s, 3m), m\in\{1,2\}$ is obtained by Lemma \ref{lema-(2,3),aneg} because every internal limit cycle bifurcating from the homoclinic loop appears pairwise due to the symmetry.

Therefore, each one of the first two steps of the proof of Lemma \ref{lema-(2,3),aneg} generates three limit cycles, being one external and two internal, so we get $3m$ large limit cycles with $m=1,2$. The $2s$ small limit cycles of the configuration $(2s,3m)$ follows also from Lemma \ref{lema-(2,3),aneg} and symmetry. Finally the configuration $(2s,3m+k)$ is obtained from the previous cases and observing that the value of $k$ is determined from the break of the homoclinic loop. That break could generate one external large limit cycle ($k=1$), or two internal large limit cycles by using symmetry ($k=2$), see Lemma \ref{lema-(2,3),aneg}.

Concerning the realization we notice that $2s+3m+k=12$ can be obtained with $s=m=k=2$ which is precisely the situation of Lemma \ref{lema-(2,3),aneg} when we apply the symmetry. The cases $2s+3m+k<12$ are obtained similarly from the other lemmas from the current section. Finally we notice that the configurations of limit cycles after and before the translation of system \eqref{FormaCanonicaR-L-aneg} are clearly preserved. The same results can be obtained for system \eqref{sistema-R-Lgeneralizado} with $a<0$ and $b>0$. So we are done.

\begin{remark}\label{RemarkTeorema}
We finish remarking some aspects of Theorem \ref{teorema-apos} that must be clarified.
\begin{itemize}
\item[(I)] It does not provide any information about configurations having more than three large limit cycles. Indeed to obtain such configurations one should take into account higher orders of Melnikov function, which is not considered in this paper.
\item[(II)] The value $k$ is determined by broking the loop, then an extra limit cycle emerges. If such a loop is broken in such way that a limit cycle appears internally to the loop surrounding one of the equilibrium, so by the symmetry we have a second limit cycle so $k=2$. Otherwise, we get an externally limit cycle so in this case $k=1$.
	\end{itemize}
\end{remark}

\vs

\noindent {\textbf{Acknowledgments.}} The first and third authors are supported by CNPq-Brazil grant 443302/\-2014-6, PROCAD/CAPES grant 88881.0 68462/2014-01 and PRO\-NEX/FAPEG grant 2017/10267000-508. The second author is partially supported by the Ministerio de Ciencia, Innovación y Universidades, Agencia Estadual de Investigación grant MJM 2016-77278-8 (FEDER), the Agència de Gestió d'Ajuts Universitaris i de Recerca grant 2017SGR1617, and the H2020 European Research Council grant MSCA-RISE-2017-777911.

\end{document}